\documentclass{amsart}

\usepackage[utf8]{inputenc}

\usepackage[centertags]{amsmath}
\usepackage{MnSymbol}
\usepackage{amsthm}
\usepackage{amscd}
\usepackage{enumerate}
\usepackage[abbreviations]{foreign}
\usepackage[all,cmtip,2cell]{xy}
\UseAllTwocells
\usepackage{enumitem}
\usepackage{xcolor}
\definecolor{darkblue}{rgb}{0,0,0.3}
\usepackage[ocgcolorlinks,colorlinks=true, citecolor=darkblue, filecolor=darkblue, linkcolor=darkblue, urlcolor=darkblue]{hyperref}
\usepackage{hyphenat}

\usepackage[all,2cell]{xy}
\UseAllTwocells
\input xy
\xyoption{2cell}
\xyoption{all}
\usepackage{verbatim}
\usepackage{eucal}
\usepackage{stackrel}

\usepackage{tikz}
\usetikzlibrary{cd}
\usetikzlibrary{calc}
\usetikzlibrary{math}
\usetikzlibrary{arrows}
\usetikzlibrary{fit}
\usetikzlibrary{positioning}

\usetikzlibrary{decorations.pathmorphing, arrows.meta}

\tikzset{Rightarrow/.style={double equal sign distance,>={Implies},->},
RightarrowDashed/.style={double equal sign distance, dashed, >={Implies},->},
triple/.style={-,preaction={draw,Rightarrow,}},
tripleDashed/.style={-,dashed,preaction={draw,RightarrowDashed}},
quadruple/.style={preaction={draw,Rightarrow,shorten >=0pt},shorten >=1pt,-,double,double
distance=0.2pt}
quadrupleDashed/.style={preaction={draw,RightarrowDahed,shorten >=0pt},shorten >=1pt,-,dashed,double,double distance=0.2pt}}

\usepackage{color}
\usepackage{eucal}

\usepackage{abstract}


\newtheorem*{thm*}{Theorem}
\newtheorem{thm}{Theorem}[section]
\newtheorem{cor}[thm]{Corollary}

\newtheorem{lemma}[thm]{Lemma}
\newtheorem{prop}[thm]{Proposition}

\theoremstyle{definition}
\newtheorem{defi}[thm]{Definition}
\newtheorem{notate}[thm]{Notation}

\newtheorem{obs}[thm]{Observation}

\theoremstyle{remark}
\newtheorem{rmk}[thm]{Remark}
\newtheorem{example}[thm]{Example}

\newcommand{\tr}[2]{\mathchoice
	{#1\raise -1.8pt\vbox{\hbox{$\kern -.8pt/\mathsmaller{#2} $}}}
	{#1\raise -1.8pt\vbox{\hbox{$\kern -.8pt/#2$}}\kern .8pt}
	{#1\raise -1.8pt\vbox{\hbox{$\scriptstyle\kern -.8pt /#2$}}}
	{#1\raise -1.8pt\vbox{\hbox{$\scriptscriptstyle\kern -.8pt /#2$}}}}
\newcommand{\trbis}[2]{\mathchoice
	{#1\raise -1.8pt\vbox{\hbox{$\kern -.8pt\mathsmaller{/#2} $}}}
	{#1\raise -1.8pt\vbox{\hbox{$\kern -.8pt\mathsmaller{/#2}$}}\kern .8pt}
	{#1\raise -1.8pt\vbox{\hbox{$\scriptstyle\kern -.8pt /#2$}}}
	{#1\raise -1.8pt\vbox{\hbox{$\scriptscriptstyle\kern -.8pt /#2$}}}}

\newcommand{\A}{\mathcal{A}}
\renewcommand{\plus}[1]{\mathop{\amalg}\limits_{#1}}

\newcommand\nbd\nobreakdash

\newcommand{\s}{\mathcal{S}\mspace{-2.mu}\text{et}_{\Delta}}
\newcommand{\Ss}{\mathcal{S}\mspace{-2.mu}\text{et}_{\Delta}^{\,\mathrm{sc}}}

\newcommand{\ndef}{\emph}

\newcommand\pdfoo{\texorpdfstring{$\infty$}{oo}}
\newcommand\pdftwo{\texorpdfstring{$2$}{2}}

\newcommand{\Cat}{{\mathcal{C}\mspace{-2.mu}\mathit{at}}}
\newcommand{\nCat}[1]{{#1}\hbox{\protect\nbd-}\kern1pt\Cat}	

\newcommand{\msCat}{\mathcal{C}at^+_{\Del}}
\newcommand{\sCat}{\mathcal{C}at_{\Del}}

\newcommand{\C}{\mathcal{C}}
\newcommand{\D}{\mathcal{D}}

\newcommand{\E}{\mathcal{E}}

\newcommand{\M}{\mathcal{M}}

\renewcommand{\L}{\mathcal{L}}
\newcommand{\R}{\mathcal{R}}

\newcommand{\B}{\mathcal{B}}

\newcommand{\bS}{\mathbf{S}}

\newcommand{\ho}{\mathrm{ho}}




\DeclareMathOperator{\Hom}{Hom}

\DeclareMathOperator{\sca}{sc}

\DeclareMathOperator{\Psh}{PSh}


\def\Del{{\Delta}}

\def\Lam{{\Lambda}}

\renewcommand{\twocell}[5]{\ar@<#4ex>@{}[#1] \ar@<#4ex>@{=>}?(#3)+/d #5cm/;?(#3)+/u #5cm/^{#2}}


\makeatletter
\renewcommand{\tocsection}[3]{%
  \indentlabel{\@ifnotempty{#2}{\bfseries\ignorespaces#1 #2\quad}}\bfseries#3} 

\renewcommand{\tocsubsection}[3]{%
  \indentlabel{\@ifnotempty{#2}{\hspace{1.6em}\ignorespaces#1 #2\quad}}#3}
\makeatother

\numberwithin{equation}{section}

\setlist[itemize]{leftmargin=*}
\setlist[enumerate]{leftmargin=*}

\relpenalty=10000
\binoppenalty=10000


\title[Cartesian factorization sistems and pointed cartesian fibrations]%
{Cartesian factorization sistems and pointed cartesian fibrations of \(\infty\)-categories}

\author{Edoardo Lanari}
\address{Institute of Mathematics CAS \\ \v{Z}itn\'a 25 \\115 67   Praha 1\\ Czech Republic}
\email{edoardo.lanari.el@gmail.com}
\urladdr{https://sites.google.com/view/edoardo-lanari}
\subjclass[2010]{18G30, 18G55, 55U10, 55U35}
\begin{document}

\maketitle
\begin{abstract}
	The goal of this paper is to prove an equivalence between the \((\infty,2)\)-category of \emph{cartesian} factorization systems on \(\infty\)-categories and that of \emph{pointed} cartesian fibrations of \(\infty\)-categories. This generalizes a similar result known for ordinary categories and sheds some light on the interplay between these two seemingly distant concepts.
\end{abstract}
\tableofcontents
\section*{Introduction}
This paper is part of an ongoing project aimed at understanding fibrations in higher categories. The main source of inspiration was \cite{RosickyTholenFact}, in which the authors analyse, among other things, the relation between fibrations of categories and orthogonal factorization systems on the corresponding total categories. More precisely, they prove the following result.
\begin{thm*}[Thm. 3.9, \cite{RosickyTholenFact}]
In a finitely complete category \(\C\), \((\E,\M)\) is a simple reflective factorization system on \(\C\) if and only if there exists a prefibration \(p\colon \C \to \B\) preserving the terminal object with\[\E=p^{-1}(\mathrm{Iso}\B), \quad \M=\mathrm{Cart}(p)\] where \(\mathrm{Cart}(p)\) denotes the class of \(p\)-cartesian morphisms.
\end{thm*}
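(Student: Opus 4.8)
The plan is to prove both implications through a single dictionary: the promised prefibration $p$ should be the reflector $L\colon\C\to\R$ attached to the factorization system, where $\R=\C_\M$ is the full subcategory of \emph{local} objects (those $Z$ with $Z\to 1$ in $\M$) and $\B\simeq\R$. Since isomorphisms lie in $\M$, $\M$ is closed under composition and stable under pullback, $\R$ is closed under limits and contains $1$; the reflection $LY$ is the $\M$-part of the factorization $Y\xrightarrow{\eta_Y}LY\to 1$ of the terminal map, so that $\E=\{f:Lf\text{ iso}\}$ and, by \emph{simplicity}, $\M=\{f:\text{the }\eta\text{-naturality square of }f\text{ is a pullback}\}$. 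The whole proof consists in matching these two descriptions with $p^{-1}(\Iso\B)$ and $\Cart(p)$.

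For the forward implication I would set $p=L\colon\C\to\R$ and check first that $p(1)=1$ (as $1$ is local) and $\E=p^{-1}(\Iso)$ (the class inverted by $L$). The heart is the identification $\Cart(p)=\M$, which I would obtain by a Yoneda argument: reflectivity gives $\Hom_\B(pZ,pX)\cong\Hom_\C(Z,LX)$ naturally, so $f$ is $p$-cartesian iff for every $Z$ the comparison $\Hom_\C(Z,X)\to\Hom_\C(Z,Y)\times_{\Hom_\C(Z,LY)}\Hom_\C(Z,LX)$ is a bijection, which by Yoneda says exactly that the $\eta$-naturality square of $f$ is a pullback in $\C$; simplicity then equates this class with $\M$. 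To see that $p$ is a prefibration I would produce weakly cartesian liftings of $u\colon B\to LY$ at $Y$ out of the pullback $Y\times_{LY}B$ (available by finite completeness), corrected by the reflector so as to land in the fibre over $B$.

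For the reverse implication, assuming $p\colon\C\to\B$ is a prefibration with $p(1)=1$, I would set $\E=p^{-1}(\Iso\B)$ and $\M=\Cart(p)$. Factoring $f\colon X\to Y$ through a (weakly) cartesian lift of $pf$ at $Y$ yields $f=m\circ e$ with $p(e)$ an isomorphism, hence $e\in\E$; the model to keep in mind is the pullback $W=Y\times_{LY}LX$ of $\eta_Y$ along $Lf$, for which $Lf$ (a morphism of local objects) has a trivially-pullback unit square and so lies in $\Cart(p)$, whence $m\in\M$ by pullback-stability of $\M$. Orthogonality $\E\perp\M$ then follows from the universal property of cartesian morphisms together with the fact that the left-hand maps are inverted by $p$; the terminal-object hypothesis supplies the reflective subcategory (the fibre over $1_\B$, equivalently $\C_\M$), and the pullback description of $\Cart(p)$ recovers simplicity.

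The main obstacle, and the point tying the two halves together, is the precise calibration of \emph{simplicity} against \emph{prefibration} (rather than semi-left-exactness against genuine fibration). The naive candidate lift $Y\times_{LY}B$ is always an $\M$-morphism, but $p$ need not carry it to the prescribed base object $B$: in a merely simple system $\E$ fails to be stable under pullback, equivalently $L$ fails to preserve the pullbacks $W=Y\times_{LY}LX$. This is exactly why the liftings one can build are only \emph{weakly} cartesian, so that $p$ is a prefibration and not a fibration; dually, in the reverse implication the assertion \(e\in\E\) is equivalent to $p$ preserving that same pullback $W$, which is the concrete incarnation of simplicity. Isolating this pullback-preservation property and proving it equivalent both to simplicity of $(\E,\M)$ and to the existence of weakly cartesian liftings for $p$ is where the real work lies; once it is in hand, the residual checks (closure of $\E$ and $\M$ under composition, two-out-of-three for $\E$, and uniqueness of diagonal fillers) are routine.
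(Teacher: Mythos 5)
The first thing to note is that the paper does not prove this statement: it is quoted from \cite{RosickyTholenFact} as motivation, and the body of the paper instead proves the $\infty$-categorical analogue (Theorem \ref{thm:equiv}), with cartesian fibrations in place of prefibrations and cartesian factorization systems in place of simple reflective ones. Your dictionary is nevertheless exactly the one the paper uses for that analogue: take $p$ to be the reflector $L$, identify $\E$ with the maps inverted by $L$, and identify $\M$ with the cartesian morphisms via the observation that $f$ is cartesian over $L$ precisely when its $\eta$-naturality square is a pullback (compare Proposition \ref{prop:p-cart with hopull}, the proof of Proposition \ref{prop:cart fib characterization}, and Corollary \ref{cor:cart fact are simple}). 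So the overall strategy is sound and well aligned with the paper's.

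The gap is that you defer the one step that carries the actual content of the theorem, and the step you do sketch in its place would fail as written. In the forward direction you propose to obtain the prefibration structure from the pullback $Y\times_{LY}B$, ``corrected by the reflector so as to land in the fibre over $B$''; no such correction exists in general --- when $L$ does not preserve that pullback one has $L(Y\times_{LY}B)\not\cong B$, and then the projection is simply not a lift of $u$ at the object $Y$, weakly cartesian or otherwise. The way out is the one recorded in the paper's gloss on \cite{RosickyTholenFact}: a prefibration is there \emph{defined} by the requirement that each slice functor $p_Y\colon\C_{/Y}\to\B_{/pY}$ admit a right adjoint whose induced monad is idempotent. For $p=L$ that right adjoint is exactly $u\mapsto(Y\times_{LY}B\to Y)$ (this needs only finite completeness and the reflection), so the entire theorem reduces to the equivalence between simplicity of $(\E,\M)$ and idempotency of these slice monads. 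You correctly name this equivalence as ``where the real work lies,'' but you prove neither implication, and the reverse direction inherits the same omission: orthogonality of $p^{-1}(\Iso\B)$ against $\Cart(p)$ and uniqueness of diagonal fillers are immediate for genuinely cartesian lifts (this is how Proposition \ref{prop:fact sys from cart fib} argues in the fibration case), but for the weak lifts a prefibration provides they require precisely the idempotency hypothesis, so they are not ``routine residual checks.'' Until that lemma is isolated and proved in both directions, what you have is a correct outline with the central argument missing.
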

Our version of this for \(\infty\)-categories is given in Theorem \ref{thm:equiv}, which we anticipate here.
\begin{thm*}
	There is an equivalence of \(\infty\)-bicategories between \(\mathbf{Cart}_{\ast}\) and \(\textbf{Fact}_{cart}\), which sends an object \(p\colon \E \to \B\) in \(\mathbf{Cart}_{\ast}\) to \((\E,(S^p_L,S^p_R))\), where \(S^p_L\) is the class of maps inverted by \(p\) and \(S^p_R\) is the class of \(p\)-cartesian morphisms.
\end{thm*}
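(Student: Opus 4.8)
The plan is to exhibit the assignment $p\mapsto(\E,(S^p_L,S^p_R))$ as a biequivalence, and it is cleanest to organize the argument around a single functor $\Phi\colon\mathbf{Cart}_{\ast}\to\textbf{Fact}_{cart}$ together with the recognition principle that a functor of $\infty$-bicategories is an equivalence as soon as it is essentially surjective on objects (up to equivalence) and induces equivalences on all mapping $\infty$-categories. First I would verify that $\Phi$ is well defined on objects, \ie that $(S^p_L,S^p_R)$ really is a cartesian factorization system on $\E$. The factorization of a morphism $f\colon x\to y$ is produced by choosing a $p$-cartesian lift $\bar f\colon\tilde x\to y$ of $p(f)$ with target $y$; the universal property of $\bar f$ then yields an essentially unique $g\colon x\to\tilde x$ lying over an equivalence of $\B$, so that $f\simeq\bar f\circ g$ with $g\in S^p_L$ and $\bar f\in S^p_R$. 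The orthogonality $S^p_L\perp S^p_R$ is precisely the statement that a lifting problem of a $p$-inverted map against a $p$-cartesian map has a contractible space of solutions, which is immediate from the mapping-space characterization of cartesian edges once one observes that the induced problem downstairs in $\B$ is already contractible. The essential uniqueness of the factorization, hence the factorization-system axioms in the $\infty$-categorical sense, follows from the same universal property.

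For essential surjectivity I would run the construction backwards. Given a cartesian factorization system $(\E,(L,R))$, set $\B:=\E[L^{-1}]$ and let $p\colon\E\to\B$ be the localization functor; the pointing should correspond to the section supplied by the reflective structure of the factorization system (matching the terminal-object-preservation in the Rosick\'y--Tholen picture). One then checks that $p$ inverts precisely $L$ (using saturation of the left class) and, more substantially, that $p$ is a cartesian fibration whose cartesian edges are exactly $R$: here the factorizations themselves furnish the cartesian lifts, the $R$-part of a factorization being the candidate lift and the $L$-part witnessing the comparison over $\B$. Feeding this $p$ back into $\Phi$ returns $(\E,(L,R))$ up to equivalence, so $\Phi$ is essentially surjective. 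Note that one need not promote this to a functor $\textbf{Fact}_{cart}\to\mathbf{Cart}_{\ast}$: essential surjectivity together with local fully faithfulness will suffice.

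The heart of the matter is local fully faithfulness: for pointed cartesian fibrations $p,p'$ one must show that $\Phi$ induces an equivalence of mapping $\infty$-categories
\[
\Map_{\mathbf{Cart}_{\ast}}(p,p')\xrightarrow{\ \sim\ }\Map_{\textbf{Fact}_{cart}}\bigl(\Phi p,\Phi p'\bigr).
\]
A $1$-morphism on the left is a square commuting with $p,p'$, preserving cartesian edges and compatible with the pointings; such a functor automatically carries $S^p_L$ into $S^{p'}_L$ and $S^p_R$ into $S^{p'}_R$, hence defines a morphism of factorization systems, and conversely a factorization-preserving functor $\E\to\E'$ descends along the localizations to a compatible map of bases. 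The content is that these two passages are coherently mutually inverse, and that the analogous statement holds one level up, for $2$-morphisms, where the component over $\B$ is forced by the component over $\E$ through the universal property of $\E\to\E[L^{-1}]$.

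I expect the main obstacle to be exactly this coherent comparison of mapping $\infty$-categories, rather than the object-level verifications. The difficulty is that a morphism of pointed cartesian fibrations carries genuine data over the base $\B$, whereas a morphism of factorization systems lives entirely on $\E$; reconstructing the base component, and all the higher coherences relating the two descriptions, requires the universal property of localization to be applied at the level of $\infty$-categories of functors and natural transformations, not merely on objects. Concretely, I would realize $\Phi$ as a map in the chosen model for $\infty$-bicategories (scaled simplicial sets) and establish the equivalence on hom-$\infty$-categories by an explicit inverse construction together with a unit/counit argument, so that the coherence is discharged once and for all at the model-categorical level.
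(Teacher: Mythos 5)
Your overall strategy coincides with the paper's: check the object assignment lands in \(\textbf{Fact}_{cart}\), prove essential surjectivity by showing the localization \(\E \to \E[\L^{-1}]\) is a pointed cartesian fibration, and identify the mapping \(\infty\)-categories by using that \(p\) is itself a localization (so the data over the base is redundant). The last point is exactly the paper's Lemma \ref{lem:mapping space of special cart fib}, which you correctly flag as the place where the universal property of localization must be applied at the level of functor \(\infty\)-categories; your unit/counit phrasing is vaguer but the idea is the same.

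There is, however, a genuine gap: nowhere in your argument do you engage with the condition that actually defines a \emph{cartesian} object of \(\textbf{Fact}\), namely condition \((\ast)\) of Definition \ref{def:cartesian objects}, that the restriction \(L_{\vert}\colon (\R)_{/x}\to L\E_{/Lx}\) of the slice localization is an equivalence. On the forward direction you verify only the factorization-system axioms for \((S^p_L,S^p_R)\), not that the resulting object is cartesian, so well-definedness of \(\Phi\) on objects is incomplete. More seriously, in the essential surjectivity step you assert that ``the factorizations themselves furnish the cartesian lifts'' for \(p\colon\E\to\E[\L^{-1}]\); as written this argument never uses cartesianness of \((\L,\R)\), and so it would equally ``prove'' that every factorization system with two-out-of-three and a terminal object arises from a cartesian fibration, which is false. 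The missing bridge is the characterization of cartesian fibrations as isofibrations whose induced functors \(p_x\colon X_{/x}\to Y_{/p(x)}\) are localizations (Proposition \ref{prop:cart fib characterization}): combined with the localizations \(\E_{/x}\to(\R)_{/x}\) produced by the sliced factorization system (Lemma \ref{lem: fact on slice} and Proposition \ref{prop: localization with 1}), condition \((\ast)\) is precisely what makes \(\E_{/x}\to L\E_{/Lx}\) a localization and hence \(p\) a cartesian fibration with cartesian edges \(\R\). You need to prove (or cite) that characterization and then exhibit where \((\ast)\) enters; without it both the object-level verification and the surjectivity argument are incomplete.
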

We have relaxed the completeness assumption but we instead consider the more natural notion of cartesian fibration rather than pre-fibration. The latter class  is defined (see Section 3.7 in \cite{RosickyTholenFact}) as that of functors \(p\colon \C \to \B\) such that for every object \(C\) in \(\C\), the induced funtor \(p_C\colon \C_{/C} \to \D_{/pC}\) admits a right adjoint whose corresponding monad is \emph{idempotent}. We will prove in Proposition \ref{prop:cart fib characterization} that cartesian fibrations of \(\infty\)-categories are characterized by a similar but stronger property, namely that such adjunctions exist and are localizations (in the sense that the right adjoint is fully faithful).

The paper is organized as follows. In Section 1 we recall the necessary background material that serves as foundation for what follows. In particular, we clarify what models for higher categories we use, and the relevant results available in the literature. We use scaled simplicial sets as model for \((\infty,2)\)-categories, which are connected via a Quillen equivalence to marked simplicial categories, which will also appear in this paper.

Section 2 is devoted to cartesian fibrations and localizations. Here, we prove a useful lemma of independent interest (see Lemma \ref{lem:construction of ladj}) which will allow us to extend certain assignments to left adjoints. Next, we characterize cartesian fibrations as maps inducing localizations at the level of slice categories (Proposition \ref{prop:cart fib characterization}).

In the next section, we introduce factorization systems and prove some facts which are stated without proof in the literature (see Proposition \ref{prop:equivalent def of fact syst} and \ref{prop:fact sys from cart fib}). Next, we consider localizations on \(\infty\)-categories with a terminal object which are induced by factorization systems, and we prove stability of factorization systems under the formation of slice categories.

In the fourth and final section, the \((\infty,2)\)-categories of cartesian factorization systems and pointed cartesian fibrations are introduced. We denote them, respectively, by \(\mathbf{Fact}_{cart}\) and \(\mathbf{Cart}_{\ast}\). Here, after a careful analysis of their mapping \(\infty\)-categories, we establish the equivalence in Theorem \ref{thm:equiv}.
\section*{Acknowledgements}
The author gratefully acknowledges the support of Praemium Academiae of M.~Markl and RVO:67985840. A special thanks also goes to my friends and colleagues Ivan Di Liberti and Andrea Gagna for commenting on an early draft of this work, and to John Bourke for having directed my attention to the paper \cite{RosickyTholenFact}.
\section{Preliminaries}
In what follows, we will switch between \(\infty\)-categorical language and ordinary one quite freely. If the context allows for ambiguity we will specify, for instance, if (co)limits are intended as ordinary ones or \(\infty\)-categorical ones (e.g. homotopy (co)limits in some model categorical presentation). Also, the term \emph{\(\infty\)-groupoid} will be used to mean an object in the \(\infty\)-category of homotopy types, without any precise model of this in mind (but rather just the \(\infty\)-category obtained as the free-cocompletion of the terminal one). When we actually mean Kan complex, this will be made clear.
\subsection{\(\infty\)-groupoids and \(\infty\)-categories}
We will denote by \(\s\) the category of simplicial sets. We will employ the standard notation \(\Del^n \in \s\) for the \(n\)-simplex, and for \(\emptyset \neq S \subseteq [n]\) we write \(\Del^S \subseteq \Del^n\) the \((|S|-1)\)-dimensional face of \(\Del^n\) whose set of vertices is \(S\). For \(0 \leq i \leq n\), we will denote by \(\Lam^n_i \subseteq \Del^n\) the \(i\)'th horn in \(\Del^n\), that is, the subsimplicial set spanned by all \((n-1)\)-dimensional faces containing the \(i\)'th vertex. 
By an \(\infty\)-category we will always mean a \emph{quasi-category}, \ie a simplicial set \(X\) which admits extensions for all inclusions 
\(\Lambda^n_i\hookrightarrow\Delta^n\), for all \(n > 1\) and all \(0 < i < n\) (known as \emph{inner} horns). If an \(\infty\)-category \(X\), in addition, admits extensions for \(\Lambda^n_0\hookrightarrow \Delta^n\) and \(\Lambda^n_n\hookrightarrow \Delta^n\), then it is called a \emph{Kan complex}. These will be our favourite models for \(\infty\)-categories and \(\infty\)-groupoids, respectively.

Given an \(\infty\)-category \(\C\), we have several (equivalent) models for the \(\infty\)-groupoid of morphisms \(\C(x,y)\) between a pair of objects \(x,y\) in \(\C\). These exhibit \(\C\) as \emph{weakly enriched} over \(\infty\)-groupoids.
\begin{prop}
Let \(\C\) be an \(\infty\)-category. The following simplicial sets are all equivalent Kan complexes:
\begin{itemize}
\item the simplicial set \(\mathrm{map}_{\C}(x,y)\) defined by the following pullback square:
\[\begin{tikzcd}
\mathrm{map}_{\C}(x,y) \ar[d] \ar[r] & \C^{\Delta^1}\ar[d,"(\pi_0{,}\pi_1)"]\\
\Delta^0\ar[r,"\{(x{,}y)\}"]& \C\times \C
\end{tikzcd}\] where \((\pi_0,\pi_1)\) is the map induced by the inclusion \(\partial\Delta^1 \to \Delta^1\).
\item the simplicial set \(\mathrm{map}^{\triangleleft}_{\C}(x,y)\), whose set of \(n\)-simplices corresponds to \[\{\alpha\colon \Delta^{n+1}\to \C \ \vert \ \alpha(0)=x,\alpha_{\vert \Delta^{\{1,\ldots,n+1\}}}=\sigma(y)\}\] i.e. their restrictions to the \(d^0\)-face are degenerate at \(y\).
\item the simplicial set \(\mathrm{map}^{\triangleright}_{\C}(x,y)\), whose set of \(n\)-simplices corresponds to \[\{\alpha\colon \Delta^{n+1}\to \C \ \vert \ \alpha(n+1)=y,\alpha_{\vert \Delta^{\{0,\ldots,n\}}}=\sigma(x)\}\] i.e. their restrictions to the \(d^{n+1}\)-face are degenerate at \(x\).
\end{itemize}
\end{prop}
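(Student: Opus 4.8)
The plan is to handle the two ``pinched'' models first---each is visibly a fiber of a slice projection---and then to compare them with the symmetric model, deducing along the way that the latter is a Kan complex.

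First I would identify \(\mathrm{map}^{\triangleleft}_{\C}(x,y)\) with the fiber over \(y\) of the left fibration \(\C_{x/}\to\C\): an \(n\)-simplex of the undercategory \(\C_{x/}\) is a map \(\alpha\colon\Del^{n+1}\to\C\) with \(\alpha(0)=x\), the projection to \(\C\) is restriction along \(\Del^{\{1,\dots,n+1\}}\hookrightarrow\Del^{n+1}\), and demanding that this restriction be \(\sigma(y)\) is exactly the defining condition of \(\mathrm{map}^{\triangleleft}_{\C}(x,y)\). Dually, \(\mathrm{map}^{\triangleright}_{\C}(x,y)\) is the fiber over \(x\) of the right fibration \(\C_{/y}\to\C\). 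Since the fibers of a left or right fibration are Kan complexes, both pinched models are Kan complexes with no further work.

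To bring in \(\mathrm{map}_{\C}(x,y)\), I would interpose the cylinder slice \(\widetilde{\C}_{/y}:=\C^{\Del^1}\times_{\C,\ev_1}\{y\}\), whose \(n\)-simplices are maps \(h\colon\Del^n\times\Del^1\to\C\) with \(h|_{\Del^n\times\{1\}}=\sigma(y)\); unwinding the pullback defining \(\mathrm{map}_{\C}(x,y)\) shows that its fiber over \(x\) along \(\ev_0\) is precisely \(\mathrm{map}_{\C}(x,y)\). The monotone collapse \(c\colon\Del^n\times\Del^1\to\Del^{n+1}\) given on vertices by \((i,0)\mapsto i\) and \((i,1)\mapsto n+1\) induces, by precomposition, a map \(\C_{/y}\to\widetilde{\C}_{/y}\) over \(\C\) which restricts on fibers over \(x\) to a comparison \(\mathrm{map}^{\triangleright}_{\C}(x,y)\to\mathrm{map}_{\C}(x,y)\). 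I would prove that \(\C_{/y}\to\widetilde{\C}_{/y}\) is a trivial Kan fibration; transposing the lifting problem against \(\partial\Del^m\hookrightarrow\Del^m\) turns this into an extension problem for the quasi-category \(\C\) against an inclusion assembled from \(c\) and \(\partial\Del^m\hookrightarrow\Del^m\), which one checks to be inner anodyne. The symmetric construction, using \(\C_{x/}\) and the collapse \((i,0)\mapsto 0,\ (i,1)\mapsto i+1\), yields a trivial fibration \(\mathrm{map}^{\triangleleft}_{\C}(x,y)\to\mathrm{map}_{\C}(x,y)\).

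Finally I would conclude. A trivial Kan fibration admits a section against the cofibration \(\emptyset\to\mathrm{map}_{\C}(x,y)\), exhibiting \(\mathrm{map}_{\C}(x,y)\) as a retract of the Kan complex \(\mathrm{map}^{\triangleright}_{\C}(x,y)\); since retracts of Kan complexes are Kan, \(\mathrm{map}_{\C}(x,y)\) is a Kan complex, and each comparison, being a trivial fibration, is in particular a homotopy equivalence. Should one prefer to verify only that the comparisons are categorical equivalences, Joyal's criterion---a quasi-category categorically equivalent to a Kan complex is itself a Kan complex---delivers the same conclusion. The technical heart, and the step I expect to be the main obstacle, is the inner-anodyne verification underlying the trivial-fibration claim: because the collapse \(c\) is not a monomorphism, the relevant extension problem must be organized carefully as a pushout-product of \(c\) with the boundary inclusions before its inner anodyne character becomes visible.
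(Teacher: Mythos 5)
The paper states this proposition without proof, as standard background (it is essentially \cite[4.2.1.5, 4.2.1.8]{HTT}), so there is no internal argument to compare against. Your overall architecture is the standard one, and the first two thirds are fine: identifying \(\mathrm{map}^{\triangleleft}_{\C}(x,y)\) and \(\mathrm{map}^{\triangleright}_{\C}(x,y)\) with fibers of the left fibration \(\C_{x/}\to\C\) and the right fibration \(\C_{/y}\to\C\) is correct and immediately gives that these two models are Kan complexes.

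The technical pivot of your comparison step, however, is false: the map \(\C_{/y}\to\widetilde{\C}_{/y}\) induced by the collapse \(c\) is \emph{not} a trivial Kan fibration, and no reorganization of the transposed lifting problem produces an inner anodyne map (inner anodyne maps are monomorphisms, and the relevant pushout-product of \(c\) with \(\partial\Delta^m\hookrightarrow\Delta^m\) is not one). Concretely, lifting against \(\partial\Delta^1\hookrightarrow\Delta^1\) already fails: a \(1\)-simplex of \(\widetilde{\C}_{/y}\) is a square \(h\colon\Delta^1\times\Delta^1\to\C\) with \(h|_{\Delta^1\times\{1\}}\) degenerate at \(y\), whereas for any \(2\)-simplex \(\alpha\) the composite \(\alpha\circ c\) has its triangle on \((0,0)\leq(0,1)\leq(1,1)\) equal to the degeneracy \(s_1\) of the long edge of \(\alpha\); so any \(h\) whose second triangle is nondegenerate (such exist whenever some mapping space of \(\C\) has a nondegenerate edge) has no preimage even after the boundary is prescribed. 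What is actually true (HTT Proposition 4.2.1.2) is that \((A\star\Delta^0)\sqcup_{A\diamond\Delta^0}(A'\diamond\Delta^0)\to A'\star\Delta^0\) is a \emph{categorical equivalence} for a cofibration \(A\to A'\), whence \(\C_{/y}\to\widetilde{\C}_{/y}\) is an equivalence of right fibrations over \(\C\) and therefore a fiberwise homotopy equivalence. Consequently your retract argument for the Kan-ness of \(\mathrm{map}_{\C}(x,y)\) collapses; the fallback you mention in passing must become the main line. The cleanest repair: \(\mathrm{map}_{\C}(x,y)\) is itself the fiber over \(y\) of the left fibration \(\ev_1\colon\C^{\Delta^1}\times_{\ev_0,\C}\{x\}\to\C\), hence a Kan complex outright, and the comparison maps, being categorical equivalences between Kan complexes, are homotopy equivalences. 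The genuine content you still owe in that route is the proof of the categorical-equivalence statement above, which is where all the work in the standard references lives.
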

We will denote the \(\infty\)-groupoid represented (up to equivalence) by any of these Kan complexes by \(\C(x,y)\).

 Given an \(\infty\)-category \(X\), we will denote its homotopy category by \(\ho(X)\). This is the ordinary category having as objects the 0-simplices of \(X\), and as morphisms \(x \rightarrow y\) the set of equivalence classes of 1-simplices \(f\colon x \rightarrow y\) of \(X\) under the equivalence relation generated by identifying \(f\) and \(f'\) if there is a 2-simplex \(H\) in \(X\) with \( H|_{\Del^{\{1,2\}}}=f, \ H|_{\Del^{\{0,2\}}}=f'\) and \(H|_{\Del^{\{0,1\}}}\) degenerate on~\(x\). We recall that the functor \(\ho\colon \nCat{\infty}\rightarrow \nCat{1}\) is left adjoint to the ordinary nerve functor.
\begin{defi}
Let \(f\colon\C \to \D\) be a map of \(\infty\)-categories. Then we say \(f\) is:
\begin{itemize}
\item \emph{essentially surjective} if \(\ho(f)\colon\ho(\C)\to\ho(\D)\) is an essentially surjcetive functor between ordinary categories.
\item  \emph{fully faithful} if the induced map \(f_{x,y}\colon\C(x,y)\to \D(fx,fy)\) is an equivalence of \(\infty\)-groupoids for every pair of objects \((x,y)\) in \(\C\).
\end{itemize}
\end{defi}
Just like for ordinary category theory, we have the following useful result.
\begin{thm}[Thm. 3.9.7, \cite{CisinskiHigherCats}]
	\label{thm:ess surj+ff=eq}
A functor \(f\colon \C \to \D\) between \(\infty\)-categories is an equivalence if and only if it is essentially surjective and fully faithful.
\end{thm}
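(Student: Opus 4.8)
The plan is to prove the two implications separately, treating \emph{equivalence} $\Rightarrow$ \emph{fully faithful and essentially surjective} as the routine direction and concentrating the real work on the converse.

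For the forward direction, suppose $f$ admits a homotopy inverse $g\colon \D \to \C$ together with invertible natural transformations $gf \simeq \Id_{\C}$ and $fg \simeq \Id_{\D}$. Applying $\ho$ sends these to natural isomorphisms, so $\ho(f)$ is an equivalence of ordinary categories and in particular essentially surjective. For full faithfulness I would exploit the functoriality of the mapping $\infty$-groupoid $\C(x,y)$: the natural equivalence $gf\simeq \Id_\C$ identifies the composite $g_{fx,fy}\circ f_{x,y}$ with the automorphism of $\C(x,y)$ obtained by conjugating with an equivalence, hence with an equivalence, and symmetrically $fg \simeq \Id_\D$ makes $f_{gx,gy}\circ g_{x,y}$ an equivalence. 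Two applications of the two-out-of-three property for equivalences of $\infty$-groupoids then force each $f_{x,y}$ to be an equivalence.

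For the converse, assume $f$ is fully faithful and essentially surjective. First I would factor it as $\C \xrightarrow{\,i\,} \C' \xrightarrow{\,p\,} \D$ with $i$ inner anodyne --- hence an equivalence, and in particular itself fully faithful and essentially surjective --- and $p$ an isofibration. The class of functors that are both fully faithful and essentially surjective satisfies two-out-of-three (again a formal consequence of the functoriality of $\C(x,y)$ invoked above), so $p$ inherits both properties; by two-out-of-three for equivalences it then suffices to prove that $p$ is an equivalence. Since $p$ is already an isofibration, I would reduce this to showing that $p$ is a \emph{trivial fibration}, i.e. has the right lifting property against every boundary inclusion $\partial\Delta^n \hookrightarrow \Delta^n$.

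The remaining lifting problems are then handled degree by degree. For $n=0$, surjectivity on objects follows from essential surjectivity together with the isofibration property, which lets one lift an equivalence $p(c')\simeq d$ to an object strictly over $d$. For $n\ge 1$ the key input is that, $p$ being an isofibration, each $p_{x,y}\colon \C'(x,y)\to \D(px,py)$ is a Kan fibration; since $p$ is fully faithful it is also a weak equivalence, hence a \emph{trivial} Kan fibration, and therefore has the right lifting property against every $\partial\Delta^m \hookrightarrow \Delta^m$. I would then translate a lifting problem for $p$ against $\partial\Delta^n\hookrightarrow\Delta^n$ into a lifting problem for the appropriate $p_{x,y}$, working in the model $\mathrm{map}^{\triangleright}_{\C'}(x,y)$ whose simplices are honest simplices of $\C'$ with a prescribed degenerate face. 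I expect the main obstacle to be exactly this translation: matching a filler of $\partial\Delta^n \to \Delta^n$ with a filler of $\partial\Delta^{n-1}\to\Delta^{n-1}$ for the map of mapping spaces, keeping careful track of the distinguished vertices and of compatibility with the degeneracies, so that the trivial-fibration lifts assemble into the required simplex of $\C'$ lying over the given one in $\D$.
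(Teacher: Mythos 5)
The paper does not prove this statement: it is imported as Theorem 3.9.7 of \cite{CisinskiHigherCats}, so there is no internal proof to compare against, and your attempt has to be judged on its own terms. The forward direction is fine (the conjugation plus two-out-of-three argument for full faithfulness is the standard one), and the overall strategy for the converse --- factor through an isofibration and show that a fully faithful, essentially surjective isofibration between \(\infty\)-categories is a trivial fibration --- is the standard route, essentially the one taken in the cited reference. Two small imprecisions first: an inner anodyne/inner fibration factorization does not by itself hand you an isofibration, so you should factor in the Joyal model structure (or invoke that Joyal fibrations between fibrant objects are isofibrations), and you genuinely need the isofibration property for the \(n=0\) lifting; also, ``fully faithful and essentially surjective'' does not satisfy two-out-of-three in general (essential surjectivity of \(g\circ f\) and \(g\) does not give it for \(f\)), although the one instance you actually use --- \(i\) and \(p\circ i\) both fully faithful and essentially surjective imply the same for \(p\) --- does hold.

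The genuine gap is the step you yourself flag as ``the main obstacle'': for \(n\ge 2\) a lifting problem of \(\partial\Delta^n\hookrightarrow\Delta^n\) against \(p\) cannot be translated into a lifting problem against \(p_{x,y}\colon\C'(x,y)\to\D(px,py)\), in the model \(\mathrm{map}^{\triangleright}\) or any other, because the simplices occurring in the problem have no degenerate faces: an arbitrary \(n\)-simplex of \(\C'\) is simply not a simplex of a mapping space, and no bookkeeping of distinguished vertices and degeneracies fixes this. The standard resolution transposes along the join/slice adjunction: since \(\Delta^n=\Delta^0\star\Delta^{n-1}\) and \(\partial\Delta^n=(\Delta^0\star\partial\Delta^{n-1})\cup(\emptyset\star\Delta^{n-1})\), the lifting problem is equivalent to one for \(\partial\Delta^{n-1}\hookrightarrow\Delta^{n-1}\) against the left fibration \(\C'_{x/}\to \C'\times_{\D}\D_{px/}\) (with \(x\) the image of the initial vertex). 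Its fibers are the fibers of \(p_{x,y}\), hence contractible by full faithfulness; but to conclude one still needs the theorem that a left fibration with contractible fibers is a trivial fibration, which is a substantive result in its own right (this is where the covariant model structure, or minimal fibrations, enters the treatments of Joyal and Cisinski). So what is missing is not a degeneracy computation but the slice reformulation together with this fiberwise criterion for left fibrations; as written, the induction on \(n\) does not close.
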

As far as the basics of limits and colimits in \(\infty\)-categories that we use here, we refer the reader to Chapter 4 of \cite{HTT} for the relevant terminology and results.

Finally, if \(\C\) is an \(\infty\)-category, then we say that a subobject \(\A\subset \C\) is a \emph{full subcategory of \(\C\) spanned by a set of objects \(A\)} if there exists a set of vertices \(A\subset \C_0\) of \(\C\) such that the simplicial set \(\A\) consists of all the simplices in \(\C\) whose vertices belong to \(A\). When this is the case, it is clear that \(\A\) is itself an \(\infty\)-category and the natural inclusion \(\A \to \C\) is fully faithful.
\subsection{Marked simplicial sets and marked-simplicial categories}

Our standard reference for marked simplicial sets is Chapter 3 of \cite{HTT}, and for scaled simplicial sets and marked simplicial categories we refer the reader to \cite{LurieGoodwillie}. We will use such objects as models for higher categories, as precisely described in what follows.
\begin{defi}
	A \emph{marked simplicial set} is a pair \((X,E)\) where \(X\) is  simplicial set and \(M\) is a subset of the set of 1-simplices of \(X\), called \emph{marked} simplices, such that it contains the degenerate ones. A map of marked simplicial sets \(f\colon (X,E_X)\rightarrow (Y,E_Y)\) is a map of simplicial sets \(f\colon X \rightarrow Y\) satisfying \(f(E_X)\subseteq E_Y\).
\end{defi}

The category of marked simplicial sets will be denoted by \(\s^+\). 

\begin{notate}\label{no:marked}
	For simplicity, we will often speak only of the non-degenerate marked edges when considering a marked simplicial set. For example, if \(X\) is a simplicial set and \(E\) is any set of edges in \(X\) then we will denote by \((X,E)\) the marked simplicial set whose underlying simplicial set is \(X\) and whose marked edges are \(E\) together with the degenerate edges.  In addition, when there is no risk of ambiguity, we will omit the set of marked 1-simplices and just denote \((X,E)\) by \(X\).\end{notate}

\begin{rmk}
	The category \(\s^+\) of marked simplicial sets
	admits an alternative description, as the category of models of a limit sketch. In particular, it is a reflective localization of a presheaf category
	and it is a cartesian closed category.
\end{rmk}

\begin{thm}[\cite{HTT}]\label{t:marked-categorical}
	There exists a model category structure on the category \(\s^+\) of marked simplicial sets in which cofibrations are exactly the monomorphisms and the fibrant objects are marked simplicial sets \((X,E)\) in which \(X\) is an \(\infty\)-category and \(E\) is the set of equivalences of \(X\), \ie 1-simplices \(f\colon \Delta^1 \rightarrow X\) which are invertible in \(\ho(X)\). 
\end{thm}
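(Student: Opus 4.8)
The statement is the specialization to the terminal base of the cartesian model structure of \cite{HTT}, and the natural way to establish it is via Jeff Smith's recognition theorem for combinatorial model categories. By the Remark above, \(\s^+\) is a reflective localization of a presheaf category (it is the category of models of a limit sketch), hence locally presentable; so it suffices to produce a set \(I\) of generating cofibrations and an accessible class \(W\) of weak equivalences satisfying Smith's conditions: \(\mathrm{inj}(I)\subseteq W\); \(W\) satisfies two-out-of-three and is closed under retracts; and the class \(W\cap\mathrm{cof}(I)\) of trivial cofibrations is closed under pushout and transfinite composition.

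First I would take \(I\) to consist of the boundary inclusions \((\partial\Del^n,\emptyset)\to(\Del^n,\emptyset)\) for \(n\geq 0\) together with the single marking map \((\Del^1,\emptyset)\to(\Del^1)^{\sharp}\) that adjoins the nondegenerate edge to the marked set. A routine saturation argument then shows that \(\mathrm{cof}(I)\) is exactly the class of monomorphisms of marked simplicial sets (the boundary inclusions account for monomorphisms that reflect the marking, while the marking map accounts for increasing it), which gives the asserted description of the cofibrations.

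The crux is the control of the trivial cofibrations, and here I would import the theory of \emph{marked anodyne} morphisms: the weakly saturated class \(\mathcal{A}\) generated by the inner-horn inclusions \((\Lam^n_i)^{\flat}\to(\Del^n)^{\flat}\) for \(0<i<n\), together with the generators designed to make marked edges behave like equivalences (the horn inclusions with the final edge marked, \((\Lam^n_n,\dots)\to(\Del^n,\dots)\), and the composability generator on \((\Del^2)^{\sharp}\)). The key combinatorial input — and the step I expect to be the main obstacle — is to verify that \(\mathcal{A}\) is stable under pushout-product with arbitrary cofibrations; this is proved by a careful filtration of products of simplices, and it is precisely what makes the structure compatible with the cartesian-closed enrichment of \(\s^+\). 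One then defines \(W\) by declaring \(f\colon X\to Y\) to be a weak equivalence when \(\Map_{\s^+}(Y,Z)\to\Map_{\s^+}(X,Z)\) is a homotopy equivalence of Kan complexes (the maximal Kan complex of the internal hom) for every object \(Z\) that is injective with respect to \(\mathcal{A}\). Accessibility of \(W\) follows because such \(Z\) form an accessibly embedded subcategory, while Smith's conditions reduce to the stability property just established, together with the identification of \(\mathrm{inj}(I)\) with the marked trivial fibrations.

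Finally I would identify the fibrant objects. An object \((X,E)\) is fibrant exactly when it has the right lifting property against every trivial cofibration, equivalently against every marked anodyne map. Lifting against the inner horns \((\Lam^n_i)^{\flat}\to(\Del^n)^{\flat}\) forces \(X\) to be a quasi-category; lifting against the remaining generators forces every marked edge of \(X\) to be invertible in \(\ho(X)\) and, conversely, every equivalence of \(X\) to be marked. Hence \((X,E)\) is fibrant if and only if \(X\) is an \(\infty\)-category and \(E\) is precisely its set of equivalences, as claimed.
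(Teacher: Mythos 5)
The paper gives no argument for this statement---it is quoted directly from \cite{HTT} (the cartesian model structure of Proposition 3.1.3.7 there, specialized to the terminal base, with fibrant objects as identified in Section 3.1.4)---so the only meaningful comparison is with the proof in that source, which your outline is essentially reconstructing. Most of the reconstruction is sound: your generating set $I$ does generate exactly the monomorphisms, the Smith-recognition scheme with weak equivalences tested by mapping into injective objects is the right framework, and you correctly identify the pushout-product stability of the marked anodyne maps as the main combinatorial burden.

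There is, however, a genuine gap, and it sits precisely where the statement needs it not to: in the identification of the fibrant objects. Your list of generating marked anodyne maps omits the fourth family from Definition 3.1.1.1 of \cite{HTT}, namely the maps $(K,\emptyset)\to (K,K_1)$ for $K$ a Kan complex (it suffices to take $K=J$, the nerve of the walking isomorphism). Without these, lifting against your generators does \emph{not} force every equivalence to be marked. Concretely, the marked simplicial set $(J,\emptyset)$, with only the degenerate edges marked, has the right lifting property against all three families you list: $J$ is a quasi-category, so the inner horns lift; its marked edges are identities, hence equivalences, so the outer-horn generators $\bigl(\Lam^n_n,\dots\bigr)\to\bigl(\Del^n,\dots\bigr)$ lift by Joyal's criterion; and since $J$ is the nerve of a category, any $2$-simplex whose $d_0$- and $d_2$-faces are degenerate has degenerate long edge, so the composability generator on $(\Del^2,\Del^2_1)$ lifts as well. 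Yet the nondegenerate edges of $J$ are equivalences and are unmarked, so $(J,\emptyset)$ is not of the asserted form. Consequently the class $W$ you define by testing against these injectives is not the intended one, your asserted equivalence ``fibrant $\Leftrightarrow$ injective against marked anodyne'' is unjustified, and the final step of your last paragraph (``conversely, every equivalence of $X$ is marked'') fails for the generators as listed. The repair is simply to add the generators $(K,\emptyset)\to(K,K_1)$; with them included, the rest of your argument goes through as in \cite{HTT}.
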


\begin{rmk}
	Marked simplicial sets are a model for \((\infty,1)\)-categories. Because of the description of the fibrant objects in the model structure on \(\s^+\), we will often consider an \(\infty\)-category as a marked simplicial set, where we implicitly understand the marking as the one given by the equivalences.
\end{rmk}

\begin{defi}
	We let \(\msCat\) denote the category of categories enriched over marked simplicial sets. We will refer to these as \emph{marked-simplicial categories}.
\end{defi}

By virtue of Proposition A.3.2.4 and Theorem A.3.2.24 of \cite{HTT}, the category \(\msCat\) is endowed with a model category structure in which the weak equivalences are the \ndef{Dwyer--Kan equivalences}. More explicitly, these are the maps \(f\colon \C \to \D\) which are
\begin{itemize}
	\item
	\emph{fully-faithful:} in the sense that the maps \(f_*\colon\C(x,y)\rightarrow \D(f(x),f(y))\) are marked categorical equivalences;
	\item
	\emph{essentially surjective:} in the sense that the functor of ordinary 
	categories given by \(f_*\colon \mathbf{ho}(\C)\rightarrow\mathbf{ho}(\D)\) is essentially surjective, where for a marked-simplicial category \(\E\) we denote by \(\mathbf{ho}(\E)\) the category whose objects are the objects of \(\E\) and such that \(\Hom_{\mathbf{ho}(\E)}(x,y) := [\Del^0,\C(x,y)]\) is the set of homotopy classes of maps from \(\Del^0\) to \(\C(x,y)\) with respect to the marked categorical model structure.
\end{itemize}
We also note that the trivial fibrations in \(\msCat\) are the maps \(f\colon\C \rightarrow \D\) which are surjective on objects and such that \(f_*\colon\C(x,y)\rightarrow \D(f(x),f(y))\) is a trivial fibration of marked simplicial sets for every pair of objects \((x,y)\) in \(\C\).

\begin{example}
Our main example of a marked simplicial category is that of \(\textsc{Cat}_{\infty}\), the marked simplicial category of \(\infty\)-categories. It is defined by having as objects \(\infty\)-categories, and the marked simplicial set \(\textsc{Cat}_{\infty}(X,Y)\) between two such objects is defined to be the \(\infty\)-category \(Y^X\) with marking given by the equivalences. The rest of the structure is defined in the obvious way.
\end{example}
\subsection{Scaled simplicial sets and \pdfoo-bicategories}

We now introduce scaled simplicial sets, which form another model for the theory of (\pdfoo,\pdftwo)-categories.

\begin{defi}[\cite{LurieGoodwillie}]
	A \emph{scaled simplicial set} is a pair \((X,T)\) where \(X\) is  simplicial set and \(T\) is a subset of the set of 2-simplices of \(X\), called the subset of \emph{thin} simplices, containing the degenerate ones. A map of scaled simplicial sets \(f\colon (X,T_X)\rightarrow (Y,T_Y)\) is a map of simplicial sets \(f\colon X \rightarrow Y\) satisfying \(f(T_X)\subset T_Y\).
\end{defi}

The category of scaled simplicial sets will be denoted by \(\Ss\). 

\begin{defi}\label{d:flat-sharp}
	Given a simplicial set \(X\) we will denote by \(X_{\flat} = (X,\deg_2(X))\) the scaled simplicial consisting of \(X\) with only degenerate triangles as thin \(2\)-simplices, and by \(X_{\sharp} = (X,X_2)\) the scaled simplicial set consisting of \(X\) with all triangles thin.
\end{defi}

\begin{rmk}\label{alternative def}
	The category \(\Ss\) admits an alternative description, as the category of models of a limit sketch. In particular, it is a reflective localization of a presheaf category. In fact, we can define a category \(\Delta_{\sca}\) having as set of objects the set \(\{[k]\}_{k\geq 0}\cup \{[2]_t\}\), obtained from \(\Delta\) by adding an extra object and maps \([2]\rightarrow[2]_t, \ \sigma^i_t\colon [2]_t\rightarrow [1]\) for \(i=0,1\) satisfying the obvious relations. The category \(\Ss\) is then the reflective localization of the category of presheaves \(\Psh(\Delta_{\sca})\) 
	(of sets) at the arrow \([2]_t\plus{[2]}[2]_t \rightarrow [2]_t  \), where we have identified an object of \(\Delta_{\sca}\) with its corresponding representable presheaf. Equivalently, the local objects are those presheaves \(X\colon\Delta_{\sca}^{\mathrm{op}}\rightarrow\mathbf{Set}\) for which \(X([2]_t)\rightarrow X([2])\) is a monomorphism.
	In particular, the category \(\Ss\) is cartesian closed and it is easy to check
	that the reflector functor \(\Psh(\Delta_{\sca}) \to \Ss\) preserves monomorphisms and commutes with finite products.
\end{rmk}

\begin{notate}\label{no:scaled}
	For simplicity, we will often speak only of the non-degenerate thin 2-simplices when considering a scaled simplicial set. For example, if \(X\) is a simplicial set and \(T\) is any set of triangles in \(X\) then we will denote by \((X,T)\) the scaled simplicial set whose underlying simplicial set is \(X\) and whose thin triangles are \(T\) together with the degenerate triangles. If \(L \subseteq K\) is a subsimplicial set then we use \(T|_L := T \cap L_2\) to denote the set of triangles in \(L\) whose image in \(K\) is contained in \(T\). 
\end{notate}

\begin{defi}
	\label{anodyne defi}
	Let \(\bS\) be the set of maps of scaled simplicial sets consisting of:
	\begin{enumerate}
		\item \label{item:scaled_anodyne_i} inner horns inclusions \(\bigl(\Lam^n_i,\{\Del^{\{i-1,i,i+1\}}\}|_{\Lam^n_i}\bigr)
		\subseteq \bigl(\Del^n,\{\Del^{\{i-1,i,i+1\}}\}\bigr)\) 
		for \(n \geq 2\) and \(0 < i < n\).		
		\item \label{item:scaled_anodyne_ii} the map
		\( (\Delta^4,T)\rightarrow \bigl(\Delta^4,T\cup \{\Delta^{\{0,3,4\}}, \ \Delta^{\{0,1,4\}}\}\bigr)\), where
		\[T:=\bigl\{\Delta^{\{0,2,4\}}, \ \Delta^{\{ 1,2,3\}}, \ \Delta^{\{0,1,3\}}, \ \Delta^{\{1,3,4\}}, \ \Delta^{\{0,1,2\}}\bigr\};\]
		\item \label{item:scaled_anodyne_iii} the set of maps \(\bigl(\Lambda^n_0\plus{\Delta^{\{0,1\}}}\Delta^0,\{\Delta^{\{0,1,n\}}\}|_{\Lam^n_0}\bigr)
		\rightarrow \bigl(\Delta^n\plus{\Delta^{\{0,1\}}}\Delta^0,\{\Delta^{\{0,1,n\}}\}\bigr)\) for \(n\geq 2\).
	\end{enumerate}
\end{defi}
We call \(\bS\) the set of \ndef{generating anodyne morphisms}, and its saturation is the class of \ndef{(scaled) anodyne maps}.
\begin{rmk}
	\label{rmk:j's are anod}
	As observed in Remark 3.1.4 of \cite{LurieGoodwillie}, the inclusions of scaled simplicial sets \(j_i\colon (\Del^3,T_i)\to \Del^3_{\sharp}\), for \(i=1,2\), where \(T_i\) is the collection of 2-simplices of \(\Del^3\) containing the \(i\)'th vertex, 
	are both anodyne. 
\end{rmk}
\begin{defi}\label{d:scaled-fib}
	We will say that a map of scaled simplicial sets \(X \to Y\) is a \emph{scaled fibration} if it has the right lifting property with respect to scaled anodyne maps.
\end{defi}
The following class of objects will be of particular interest to us. 
\begin{defi}[\cite{LurieGoodwillie}]
	A \emph{weak \(\infty\)-bicategory} is a scaled simplicial set \(\C\) which admits extensions along all scaled anodyne maps. 
\end{defi}

We observe that the map in the second point of Definition \ref{anodyne defi} ensures that marked 2-simplices of weak \(\infty\)-bicategories satisfy a \emph{saturation} property, while the first set guarantees, among other things, that the subobject of a weak \(\infty\)-bicategory \(X\) spanned by those \(n\)-simplices whose 2-dimensional faces are thin is an \(\infty\)-category.


%
%

Next, we introduce a comparison between scaled simplicial sets and marked simplicial categories, in the form of an adjunction. Recall (Thm. 2.2.5.1, \cite{HTT}) that there is a Quillen equivalence of the form:
	\[ \xymatrixcolsep{1pc}
\vcenter{\hbox{\xymatrix{
			**[l]\s \xtwocell[r]{}_{\mathrm{N}}^{\mathfrak{C}
			}{'\perp}& **[r] \sCat}}}\]
		between the Joyal model structure on simplicial sets (see \cite{JoyalQCatsApplications}) and the model structure on simplicial categories (see Section 3.2 of the appendix of \cite{HTT}).
\begin{defi}
	The \ndef{scaled coherent nerve} functor \(\mathrm{N}^{\sca}\colon\msCat\rightarrow \Ss \) is defined by letting the underlying simplicial set of \(\mathrm{N}^{\sca}(\C)\) be the coherent nerve \(\mathrm{N}\C\) of the simplicially enriched category \(\C\), and setting its thin 2-simplices to be those maps \(f\colon\mathfrak{C}(\Delta^2)\rightarrow \C \) that send the unique non-degenerate 1-simplex of \(\mathfrak{C}(\Delta^2)(0,2)\) to a marked 1-simplex in \(\C(f(0),f(2))\).
\end{defi}

The functor \(\mathrm{N}^{\sca}\) admits a left adjoint \(\mathfrak{C}^{\sca}\colon \Ss \rightarrow \msCat\), whose explicit description can be found in Definition 3.1.30 of \cite{LurieGoodwillie}.

\begin{thm}[{\cite[Theorem 4.2.7]{LurieGoodwillie}}]
	\label{coherent nerve eq}
	There exists a model structure on the category \(\Ss\) of scaled simplicial sets, characterized as follows:
	\begin{itemize}
		\item a map \(f \colon X \rightarrow Y\) is a cofibration if and only if it is a monomorphism;
		\item a map \(f \colon X \rightarrow Y\) is a weak equivalence if and only \(\mathfrak{C}^{\sca}(f)\colon \mathfrak{C}^{\sca}(X)\rightarrow \mathfrak{C}^{\sca}(Y)\) is a weak equivalence in \(\msCat\).
	\end{itemize}
	Moreover, the adjoint pair 
	\[ \xymatrixcolsep{1pc}
	\vcenter{\hbox{\xymatrix{
				**[l]\Ss \xtwocell[r]{}_{\mathrm{N}^{\sca}}^{\mathfrak{C}^{\sca}
				}{'\perp}& **[r] \msCat}}}\]
	is a Quillen equivalence with respect to the model structures defined above.
\end{thm}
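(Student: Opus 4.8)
The plan is to produce the model structure by Jeff Smith's recognition theorem for combinatorial model categories, and then to deduce the Quillen equivalence from the counit criterion, taking advantage of the fact that the weak equivalences of \(\Ss\) are by definition created by the left adjoint \(\mathfrak{C}^{\sca}\), so that \(\mathfrak{C}^{\sca}\) tautologically both preserves and reflects them.

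For existence, I would take as generating cofibrations the set \(I\) consisting of the flat boundary inclusions \((\partial\Delta^n)_{\flat}\to\Delta^n_{\flat}\) for \(n\geq 0\) together with the scaling map \(\Delta^2_{\flat}\to\Delta^2_{\sharp}\); using the presheaf presentation of \(\Ss\) from Remark \ref{alternative def} one checks that the saturation of \(I\) is exactly the class of monomorphisms. I would then declare \(W\) to be the class of maps \(f\) with \(\mathfrak{C}^{\sca}(f)\) a Dwyer--Kan equivalence, and verify Smith's hypotheses: \(W\) satisfies two-out-of-three and is closed under retracts (inherited from \(\msCat\)); and \(W\) is accessible, because \(\mathfrak{C}^{\sca}\) is a left adjoint between locally presentable categories, hence accessible, \(\msCat\) is combinatorial, and the preimage of the accessible class of Dwyer--Kan equivalences under an accessible functor remains accessible.

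The geometric heart of the existence statement is the lemma that \(\mathfrak{C}^{\sca}\) carries monomorphisms to cofibrations of marked-simplicial categories. Granting it, the closure of \(W\cap\mathrm{cof}(I)\) under pushouts and transfinite composition is formal: since \(\mathfrak{C}^{\sca}\) preserves colimits, it sends a pushout of an acyclic cofibration to a pushout of an acyclic cofibration in \(\msCat\), which is again acyclic, and likewise for transfinite composites. The remaining hypothesis, that trivial fibrations lie in \(W\), I would establish by analysing the action of \(\mathfrak{C}^{\sca}\) on the generators \(I\), reducing it to the corresponding statement for the underlying functor \(\mathfrak{C}\) on simplicial sets together with a check on the markings produced by \(\mathfrak{C}^{\sca}\).

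With the model structure in place, \(\mathfrak{C}^{\sca}\) is left Quillen: it preserves cofibrations by the lemma, and it preserves acyclic cofibrations since these are monomorphisms lying in \(W\), whose images are simultaneously cofibrations and weak equivalences. Because every object of \(\Ss\) is cofibrant, the adjunction will be a Quillen equivalence as soon as \(\mathfrak{C}^{\sca}\) reflects weak equivalences between cofibrant objects and the counit \(\mathfrak{C}^{\sca}\mathrm{N}^{\sca}(\D)\to\D\) is a Dwyer--Kan equivalence for every fibrant \(\D\). The first condition holds tautologically. Everything therefore reduces to the counit, and this is the step I expect to be the main obstacle: one must compute the mapping objects of the rigidification \(\mathfrak{C}^{\sca}(\mathrm{N}^{\sca}\D)\), with their markings, and compare them coherently to the \(\D(x,y)\). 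This is the scaled counterpart of the Dugger--Spivak necklace description of \(\mathfrak{C}\) on ordinary nerves, and it is the only place where genuine higher-categorical homotopy theory enters, rather than formal properties of the adjunction.
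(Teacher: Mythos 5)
This statement is quoted verbatim from \cite[Theorem 4.2.7]{LurieGoodwillie} and the paper offers no proof of it, so there is no in-paper argument to compare yours against; the relevant benchmark is Lurie's own proof in the cited reference. Measured against that, your outline is a faithful reconstruction of the actual strategy: Lurie does establish the model structure via the combinatorial recognition principle (HTT A.2.6.13, i.e.\ Smith's theorem) with cofibrations the monomorphisms and weak equivalences created by \(\mathfrak{C}^{\sca}\), and the Quillen equivalence does reduce, exactly as you say, to the tautological reflection of weak equivalences plus the statement that the counit \(\mathfrak{C}^{\sca}\mathrm{N}^{\sca}(\D)\to\D\) is a Dwyer--Kan equivalence for fibrant \(\D\). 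Your identification of the three nontrivial inputs --- that \(\mathfrak{C}^{\sca}\) carries monomorphisms to cofibrations of \(\msCat\), that \(I\)-injectives lie in \(W\), and the counit computation --- is also accurate, and you are right that the counit is where the real work lives (Lurie bootstraps it from the unscaled comparison \(\mathfrak{C}\dashv\mathrm{N}\) of HTT, itself resting on a Dugger--Spivak-type analysis of the mapping spaces of rigidifications, together with a separate check that the markings match up).

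That said, as a proof the proposal has the shape of a table of contents: every genuinely hard step is named rather than executed. Two points deserve particular care if you were to carry this out. First, your generating set \(I\) must include the boundary inclusions with \emph{all} compatible scalings on \(\Del^n\), not only the flat ones, before its saturation can produce an arbitrary monomorphism of scaled simplicial sets (a monomorphism can add thin triangles on simplices that are already present); the single map \(\Del^2_{\flat}\to\Del^2_{\sharp}\) only repairs this in dimension \(2\) after the underlying simplices have been attached flat, so you should verify the cellularity claim rather than assert it. Second, the accessibility of \(W\) and the closure of \(W\cap\mathrm{cof}(I)\) under pushout both lean on the cofibration lemma for \(\mathfrak{C}^{\sca}\) (left properness of \(\msCat\) restricted to these pushouts enters implicitly), so that lemma is load-bearing twice over and cannot be deferred. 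None of this is a wrong turn --- it is the correct route --- but the content of the theorem is precisely in the steps you have black-boxed.
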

The model structure on \(\Ss\) described above is called the \emph{bicategorical} model structure.
One of the main results of \cite{GagnaHarpazLanariEquiv} can be summarized as follows.
\begin{thm}
The class of weak \(\infty\)-bicategories coincide with that of the fibrant objects in the bicategorical model structure on \(\Ss\). 
\end{thm}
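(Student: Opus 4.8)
The statement asserts the equality of two classes of scaled simplicial sets: the class \(\W\) of weak \(\infty\)-bicategories, characterised by the right lifting property against the generating scaled anodyne maps \(\bS\) of Definition \ref{anodyne defi}, and the class \(\F\) of fibrant objects of the bicategorical model structure, characterised by the right lifting property against \emph{all} trivial cofibrations. The plan is to prove the two inclusions separately, the inclusion \(\F\subseteq\W\) being formal and the inclusion \(\W\subseteq\F\) carrying the genuine content.

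For \(\F\subseteq\W\) it suffices to show that every map in \(\bS\) is a trivial cofibration, since then any fibrant object, having the right lifting property against all trivial cofibrations, has it against \(\bS\) and is thus a weak \(\infty\)-bicategory. Each generator in \(\bS\) is visibly a monomorphism, hence a cofibration, so the only point is that it is a weak equivalence, \ie that \(\mathfrak{C}^{\sca}\) carries it to a Dwyer--Kan equivalence in \(\msCat\). I would verify this using the explicit description of \(\mathfrak{C}^{\sca}\) (Definition 3.1.30 of \cite{LurieGoodwillie}): for the inner-horn family this reduces to the classical fact that the rigidification functor sends inner anodyne maps to Dwyer--Kan equivalences, while for the two remaining families one computes the relevant mapping marked simplicial sets before and after and checks that the induced map is bijective on objects and a marked categorical equivalence on each mapping object.

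The inclusion \(\W\subseteq\F\) is the main obstacle, and I would attack it by a retract argument. Given a weak \(\infty\)-bicategory \(X\), factor the canonical map \(X\to\Delta^0\) as a trivial cofibration \(i\colon X\to X'\) followed by a fibration \(X'\to\Delta^0\), so that \(X'\) is fibrant. If \(i\) can be arranged to be scaled anodyne, then, since \(X\) admits extensions along scaled anodyne maps, the identity of \(X\) extends along \(i\) to a retraction \(r\colon X'\to X\); thus \(X\) is a retract of the fibrant object \(X'\) and, fibrant objects being closed under retracts, \(X\) is itself fibrant. The entire difficulty is therefore to produce such a factorisation with \(i\) scaled anodyne, which is equivalent to showing that the generating trivial cofibrations of the bicategorical model structure lie in the saturated class generated by \(\bS\). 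Concretely, one must prove that a scaled simplicial set with the right lifting property against \(\bS\) already has it against the additional trivial cofibrations that force invertibility of equivalences and saturation of thin \(2\)-simplices --- the scaled analogues of the extra marked-anodyne generators in the cartesian model structure of \cite{HTT}. I would isolate these as an explicit \emph{equivalence-extension} lifting property and deduce it from the third family of scaled anodyne generators, together with the saturation generator and the anodyne maps \(j_i\) of Remark \ref{rmk:j's are anod} used to transport thinness along equivalences. This combinatorial step, rather than the formal retract argument, is where I expect the real work to be, paralleling but considerably refining the corresponding analysis for marked simplicial sets.
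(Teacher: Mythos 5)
The paper does not actually prove this statement: it is imported verbatim as one of the main results of \cite{GagnaHarpazLanariEquiv}, so there is no internal argument to compare yours against, and your proposal has to be judged on its own. The first inclusion (fibrant \(\Rightarrow\) weak \(\infty\)-bicategory) is fine and is indeed how that direction goes: the generators of \(\bS\) are monomorphisms, and \(\mathfrak{C}^{\sca}\) carries them to Dwyer--Kan equivalences, so every fibrant object lifts against them.

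The second inclusion is where the argument breaks down, and it breaks down precisely at the step you defer. The retract argument is circular: you need a factorization of \(X\to\Delta^0\) as a \emph{scaled anodyne} map followed by a map with \emph{fibrant} target. The small object argument applied to \(\bS\) produces a scaled anodyne map whose target is merely a weak \(\infty\)-bicategory --- the very class whose fibrancy is in question, and if \(X\) is already a weak \(\infty\)-bicategory this gains nothing --- while the model-structure factorization produces a fibrant target but a trivial cofibration with no reason to be scaled anodyne. Your proposed reformulation, that the generating trivial cofibrations lie in the saturation of \(\bS\), is not equivalent to the theorem but strictly stronger, and it is false: every map in the saturation of \(\bS\) is bijective on vertices (all three families of generators are, and vertex-bijective monomorphisms are closed under pushout, transfinite composition and retract), whereas \(\Delta^0\to J_{\sharp}\) (the maximally scaled walking equivalence) is a bicategorical trivial cofibration that is not, yet every fibrant object lifts against it. This is exactly the situation of the Joyal model structure, where \(\{0\}\to J\) is a trivial cofibration that is not inner anodyne and the identification of the fibrant objects with the quasi-categories requires Joyal's special-outer-horn theorem rather than a retract trick. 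The genuine content here is of the same nature: one must show directly that a scaled simplicial set with the extension property against \(\bS\) also extends against Lurie's larger class of maps characterizing fibrancy (outer horns whose initial edge is an equivalence rather than degenerate, detection of thin triangles by invertibility, and so on). That combinatorial theorem is the main result of \cite{GagnaHarpazLanariEquiv} and occupies a substantial part of that paper; it cannot be recovered from the formal manipulations you describe, and your sketch of "deducing an equivalence-extension property from the third family of generators and the maps \(j_i\)" names the right ingredients but contains no argument.
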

We refer to the fibrant objects of the bicategorical model structure as \(\infty\)-\emph{bicategories}, for simplicity.
\section{Cartesian fibrations}
In this section we recall the notion of cartesian fibration of simplicial sets, and give a new characterization of it in terms of the existence of certain adjoint functors between \(\infty\)-categories. We fix a map \(p\colon X \to Y \) of simplicial sets which we will refer to throughout the whole section.

\begin{defi}
A \(1\)-simplex \(f\colon x' \to x\) in \(X\) is said to be \emph{\(p\)-cartesian} if for any given solid commutative square as depicted below, where \(n\geq 2\), the dashed lifting exists provided \(g(\Delta^{\{n-1,n\}})=f\).
\[
\begin{tikzcd}
\Lambda^n_n \ar[d,hook] \ar[r,"g"] & X\ar[d,"p"]\\
\Delta^n \ar[r,"l"]&Y
\end{tikzcd}\]
\end{defi}
Given the notion of cartesian 1-simplices, a cartesian fibration is essentially a map with enough cartesian lifts.
\begin{defi}
A map \(p\colon X \to Y\) is a \emph{cartesian fibration} if it is an inner fibration, and for every 1-simplex \(h\colon y \to p(x)\) there exists a \(p\)-cartesian 1-simplex \(\bar{h}\colon x' \to x\) with \(p(\bar{h})=h\)
\end{defi}
By dualizing the previous definitions we get the notion of \emph{\(p\)-cocartesian} 1-simplices and \emph{cocartesian fibrations}.

If the base of our map is an \(\infty\)-category we can give the following characterization.
\begin{prop}[\cite{HTT}, Prop.2.4.4.3]
	\label{prop:p-cart with hopull}
Let \(p\colon X \to Y\) be an inner fibration of \(\infty\)-categories and \(f\colon x' \to x\) a \(1\)-simplex of \(X\), then the following are equivalent:
\begin{enumerate}
\item \(f\) is \(p\)-cartesian.
\item for every vertex \(z\in X\) the following square is a pullback in the \(\infty\)-category of \(\infty\)-groupoids:
\begin{equation}
\label{eq:p-cart with hopull}
\begin{tikzcd}
X(z,x')\ar[r,"f\circ -"] \ar[d,"p_{z,x'}"{swap}] & X(z,x)\ar[d,"p_{z,x}"]\\
Y(pz,px') \ar[r,"p(f)\circ -"] & Y(pz,px)
\end{tikzcd}
\end{equation}
\end{enumerate}
\end{prop}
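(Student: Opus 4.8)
The plan is to reduce the homotopy-pullback assertion of condition (2) to a single statement about a map of slice \(\infty\)-categories, and then to recognize the defining lifting property of condition (1) as precisely the condition that this map is a trivial fibration. Write \(X_{/f}\) for the slice of \(X\) over the edge \(f\colon\Delta^1\to X\) (so that an \(n\)-simplex of \(X_{/f}\) is a map \(\Delta^{n+2}\to X\) whose restriction to the final edge \(\Delta^{\{n+1,n+2\}}\) equals \(f\)), and similarly \(X_{/x}\), \(Y_{/px}\), \(Y_{/pf}\) for the slices over the relevant vertices and edge. The source \(x'\) of \(f\) and the map \(p\) induce a canonical comparison
\[
\theta\colon X_{/f}\longrightarrow X_{/x}\times_{Y_{/px}}Y_{/pf},
\]
which is a right fibration, and I claim that \(f\) is \(p\)-cartesian if and only if \(\theta\) is a trivial Kan fibration. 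First I would prove this equivalence by unwinding both sides along the join--slice adjunction: a lifting problem for \(\theta\) against a boundary inclusion \(\partial\Delta^m\hookrightarrow\Delta^m\) transposes, under \((-)\star\Delta^1\), to exactly a lifting problem of the form appearing in the definition of a \(p\)-cartesian edge, with the last edge pinned to \(f\) and the base constraint recorded by the fibre product over the slices of \(Y\). Matching the indices \(n=m+1\) and using \(\Delta^m\star\Delta^1\cong\Delta^{m+2}\), the inner and outer faces of the resulting simplex reassemble the horn \(\Lambda^n_n\), so the right lifting property of \(\theta\) against all \(\partial\Delta^m\hookrightarrow\Delta^m\) is literally the family of lifts required of \(f\).

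Next I would pass to fibres over a vertex \(z\in X\). All four slices carry target right fibrations to \(X\) (resp. to \(Y\)), and \(\theta\) is a map of right fibrations over \(X\); its fibre over \(z\) is the comparison map
\[
(X_{/f})_z\longrightarrow (X_{/x})_z\times_{(Y_{/px})_{pz}}(Y_{/pf})_{pz}.
\]
Identifying fibres with pinched mapping spaces in the sense recalled above — the fibre of \(X_{/x}\to X\) over \(z\) is \(\mathrm{map}^{\triangleright}_X(z,x)\simeq X(z,x)\), and likewise \((X_{/f})_z\simeq X(z,x')\), \((Y_{/px})_{pz}\simeq Y(pz,px)\), \((Y_{/pf})_{pz}\simeq Y(pz,px')\) — this fibre comparison becomes the canonical map from \(X(z,x')\) into the pullback of \(X(z,x)\to Y(pz,px)\leftarrow Y(pz,px')\). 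Because \(Y_{/pf}\to Y_{/px}\) restricts on fibres to a Kan fibration between Kan complexes, the strict pullback on the right computes the homotopy pullback, so this map being an equivalence is exactly the assertion that the displayed square of condition (2) is cartesian in the \(\infty\)-category of \(\infty\)-groupoids.

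Finally I would close the logical loop. A right fibration is a trivial Kan fibration if and only if it is a categorical equivalence, and a map of right fibrations over a common base is an equivalence if and only if it is a fibrewise equivalence. Chaining these standard facts with the two equivalences above gives: \(f\) is \(p\)-cartesian \(\iff\) \(\theta\) is a trivial fibration \(\iff\) \(\theta\) is a fibrewise equivalence over \(X\) \(\iff\) for every \(z\) the fibre comparison is an equivalence of Kan complexes \(\iff\) the square is a homotopy pullback for every \(z\). The main obstacle is the combinatorial translation in the first step — verifying carefully that the pushout--product of \(\partial\Delta^m\hookrightarrow\Delta^m\) with the endpoint inclusion \(\{1\}\hookrightarrow\Delta^1\) reproduces exactly the last-vertex horns \(\Lambda^n_n\hookrightarrow\Delta^n\) decorated so that \(\Delta^{\{n-1,n\}}\) carries \(f\) — together with checking that \(\theta\) is genuinely a right fibration so that the fibrewise criterion applies; the remaining identifications of fibres with mapping spaces are routine once the correct pinched models are fixed.
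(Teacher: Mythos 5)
The paper does not prove this statement: it is imported verbatim from \cite{HTT}, Prop.~2.4.4.3, so there is no internal proof to compare against. Your argument is a correct reconstruction of the standard proof of that result (via the right fibration \(\theta\colon X_{/f}\to X_{/x}\times_{Y_{/px}}Y_{/pf}\), the join--slice transposition identifying its lifting problems against \(\partial\Delta^m\hookrightarrow\Delta^m\) with the horn-filling problems defining \(p\)-cartesian edges, and the fibrewise criterion for equivalences of right fibrations); the only slip is in the index bookkeeping, since \(\Delta^m\star\Delta^1\cong\Delta^{m+2}\) forces \(n=m+2\) rather than \(n=m+1\) (which is also what makes the range \(n\geq 2\) in the definition of a \(p\)-cartesian edge come out right for \(m\geq 0\)).
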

We now recall the fibrational definition of adjunction of \(\infty\)-categories.
\begin{defi}[\cite{HTT}, Def.5.2.2.1]
	Let \(\C,\D\) be \(\infty\)-categories. An \emph{adjunction} between \(\C\) and \(\D\) is a map \(q\colon\M\to \Delta^1\) which is simultaneously a cartesian and a cocartesian fibration, satisfying \(q^{-1}(0)\simeq \C\) and \(q^{-1}(1)\simeq \D\). By Proposition 5.2.1.4 of \cite{HTT}, we can associate functors \(f\colon \C \to \D\) and \(g\colon \D \to \C\) to such \(q\), and we say that \(f\) (resp. \(g\)) is \emph{left adjoint } to \(g\) (resp. \emph{right adjoint} to \(f\)). We denote this situation by \(f\dashv g\).
\end{defi}
The following result allows us to construct left adjoints given their assignments on objects, provided we have a suitable family of equivalences of hom-spaces.
\begin{lemma}
	\label{lem:construction of ladj}
	Let \(R\colon \D \to \C\) be a map of \(\infty\)-categories, and suppose given a map of vertices \(L_0\colon \C_0 \to \D_0\). Given equivalences of \(\infty\)-groupoids \(\Psi_{c,d}\colon\D\left(L_0c,d\right)\overset{\simeq}{\longrightarrow}\C(c,Rd)\) natural in \(d\in\D\), we can extend \(L_0\) to a map \(L\colon\C \to \D\) satisfying \(L\dashv R\).
\end{lemma}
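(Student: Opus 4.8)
The plan is to realize the desired adjunction through its fibrational avatar, following the definition recalled above: I would produce a single map \(q\colon \M \to \Delta^1\) which is simultaneously a cartesian and a cocartesian fibration, with \(q^{-1}(0)\simeq \C\) and \(q^{-1}(1)\simeq \D\), whose cartesian functor is \(R\) and whose cocartesian functor is the sought-after \(L\). The natural candidate for \(\M\) is the cograph (mapping correspondence) of \(R\): a cartesian fibration over \(\Delta^1\) with fibres \(\C\) over \(0\) and \(\D\) over \(1\) whose mapping \(\infty\)-groupoids satisfy \(\M(c,d)\simeq \C(c,Rd)\) for \(c\in\C\) and \(d\in\D\), the cartesian lift of the non-degenerate edge with target \(d\) having source \(Rd\). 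This construction (\cite{HTT}) always yields a cartesian fibration classifying \(R\), so the entire content of the lemma is concentrated in showing that \(\M\) is moreover a cocartesian fibration and that its cocartesian functor restricts to \(L_0\) on objects.

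By the dual of Proposition \ref{prop:p-cart with hopull}, it suffices to exhibit, for every object \(c\in\C\), a \(q\)-cocartesian edge out of \(c\) with target \(L_0 c\); equivalently, a morphism \(u_c\in\M(c,L_0 c)\simeq\C(c,RL_0 c)\) such that precomposition with \(u_c\) induces an equivalence \(\D(L_0 c,e)=\M(L_0 c,e)\to\M(c,e)=\C(c,Re)\) for every \(e\in\D\). This is exactly where the hypothesis enters: the family \(\{\Psi_{c,d}\}_{d}\) is a natural equivalence between the copresheaves \(\D(L_0 c,-)\) and \(\C(c,R-)\) on \(\D\), so by the \(\infty\)-categorical Yoneda lemma (\cite{HTT}) it is classified by the single point
\[ u_c := \Psi_{c,L_0 c}(\mathrm{id}_{L_0 c})\in\C(c,RL_0 c), \]
and naturality of \(\Psi_{c,-}\) in \(d\) forces \(\Psi_{c,e}\) to agree, under the identification \(\M(c,e)\simeq\C(c,Re)\), with the map ``compose with \(u_c\)''. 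Since each \(\Psi_{c,e}\) is an equivalence by assumption, \(u_c\) is \(q\)-cocartesian.

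Having produced a cocartesian lift of the unique non-degenerate edge of \(\Delta^1\) with prescribed source \(c\) for every \(c\), and since \(q\) is already an inner fibration, I conclude that \(q\) is a cocartesian fibration; being both cartesian and cocartesian over \(\Delta^1\), it defines an adjunction in the sense of the definition above. The associated cartesian functor is \(R\) by construction, while the cocartesian functor \(L\) sends each \(c\) to the target of its cocartesian lift \(u_c\), namely \(L_0 c\); hence \(L\) extends \(L_0\) on objects and satisfies \(L\dashv R\).

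I expect the main obstacle to be the Yoneda step of the second paragraph: one must pass from the abstract, pointwise equivalences \(\Psi_{c,d}\) to an honest morphism \(u_c\) in the total space \(\M\) and verify that the data assemble coherently, using only naturality in the single variable \(d\). Concretely, this requires the \(\infty\)-categorical Yoneda lemma (to extract \(u_c\) and to identify \(\Psi_{c,-}\) with composition) together with the mapping-space characterization of cocartesian edges; the one-variable naturality is exactly enough, because cocartesianness is tested one object \(e\) at a time, and the missing naturality in \(c\) is recovered for free as the naturality of the unit of the adjunction \(L\dashv R\) thus constructed.
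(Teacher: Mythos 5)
Your proposal is correct and follows essentially the same route as the paper: both pass to the correspondence \(\M\to\Delta^1\) classifying \(R\), use the Yoneda lemma together with naturality of \(\Psi_{c,-}\) to extract a single morphism \(c\to L_0c\) in \(\M\), and then verify cocartesianness via the mapping-space criterion, the case of targets in \(\C\) being vacuous. Your write-up is, if anything, slightly more explicit about the Yoneda step (identifying \(u_c=\Psi_{c,L_0c}(\mathrm{id}_{L_0c})\)) than the paper's own argument.
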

\begin{proof}
	Let \(q\colon\M\to \Delta^1\) be the cartesian fibrations associated with \(R\), so that \(q^{-1}(0)\simeq \C\) and \(q^{-1}(1)\simeq \D \). Our task is to show \(q\) is also cocartesian. To obtain this, we have to exhibit, for every \(c\in\C\) a \(q\)-cocartesian morphism \(\eta\colon c\to d\) in \(\M\) with \(d\in \D\). Let \(d\stackrel{\text{def}}{=}L_0 c\), and choose a \(q\)-cartesian lift of the unique morphism \(0\to 1=q(x)\), which we denote by \(h\colon Rx \to x\). Consider the following diagram for an arbitrary \(x\in \D\).
	\[\begin{tikzcd}
	\D(L_0 c,x)\ar[r,"\simeq"]
	& \M(L_0 c,x)\ar[d,"q_{L_0 c,x}"] \ar[r,dotted]&\M(c,x)\ar[d,"q_{c,x}"]&\C(c,Rx)\ar[l,"\simeq"]\ar[d,"q_{c,Rx}"]\\
	\ast\ar[r,"\simeq"]&\Delta^1(1,1)\ar[r,"\simeq"]&\Delta^1(0,1)&\Delta^1(0,0)\ar[l,"\simeq"]
	\end{tikzcd}\] where the right-hand square is the pullback of \(\infty\)-groupoids induced by postcomposing by \(h\).
	
	The naturality of the composite map \(\D(L_0 c,x)\overset{\Psi_{c,x}}{\longrightarrow}\C(c,Rx)\overset{h\circ-}{\longrightarrow}\M(c,x)\), together with the observation that \(\D(L_0c,y)\cong\emptyset\) if \(y\in\C\), gives us a morphism \(f\colon c\to L_0c\) in \(\M\) by the Yoneda lemma, which renders the left-hand square a pullback. In fact, since \(-\circ f\colon\M(L_0c,c)\to \M(c,x)\) is a composite of equivalences, it must be an equivalence.
\end{proof}
\subsection{Localizations}
We give here a brief recap on localizations of \(\infty\)-categories, and we suggest the read of the relevant section in Chapter 5 of \cite{HTT} to the interested reader.

\begin{defi}
	A map \(f\colon\C \to \D\) between \(\infty\)-categories is a \emph{localization} if it admits a fully faithful right adjoint.
\end{defi}
Here is a useful characterization of localization functors.
\begin{prop}[Prop.5.2.7.4, \cite{HTT}]
Let \(\C\) be an \(\infty\)-category and let \(L\colon\C \to \C\) be a functor with essential image \(L\C\subset \C\). The following conditions are equivalent:
\begin{enumerate}
\item There exists a functor \(f\colon \C \to\D\) with a fully faithful right adjoint \(g\colon\D\to \C\) and an equivalence between \(g\circ f\) and \(L\).
\item When regarded as a functor from \(\C\) to \(L\C\), \(L\) is left adjoint to the inclusion \(L\C \hookrightarrow \C\).
\item There exists a natural transformation \(\alpha\colon\C \times \Delta^1 \to \C\) with \(\alpha\colon \mathrm{Id}_{\C}\to L\) such that, for every object \(C\in\C\), the morphisms \(L(\alpha_C),\alpha_{LC}\colon LC \to LLC\) are equivalences in \(\C\).
\end{enumerate}
\end{prop}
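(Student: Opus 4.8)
The plan is to prove the three conditions equivalent by establishing the cycle (2) \(\Rightarrow\) (1) \(\Rightarrow\) (3) \(\Rightarrow\) (2), so that each implication can exploit whatever structure is most readily available. The implication (2) \(\Rightarrow\) (1) is essentially formal: taking \(\D := L\C\), \(f := L\) regarded as a functor \(\C \to L\C\), and \(g\) the inclusion \(L\C \hookrightarrow \C\), the latter is fully faithful because it is the inclusion of a full subcategory (as recorded in the Preliminaries), and \(g \circ f\) is literally \(L\); so (1) holds with the identity equivalence.

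For (1) \(\Rightarrow\) (3) I would use the unit/counit formalism. Given \(f \dashv g\) with \(g\) fully faithful, recall that full faithfulness of a right adjoint is equivalent to invertibility of the counit \(\epsilon\colon fg \to \mathrm{Id}_\D\). Set \(\alpha\) to be the unit \(\eta\colon \mathrm{Id}_\C \to gf = L\). Evaluating the two triangle identities \(\epsilon f \circ f\eta \simeq \mathrm{id}_f\) and \(g\epsilon \circ \eta g \simeq \mathrm{id}_g\) at an object \(C\), and using that \(\epsilon\) (hence \(g\epsilon_{fC}\)) is an equivalence, one reads off at once that \(\alpha_{LC} = \eta_{gfC}\) and \(L(\alpha_C) = gf(\eta_C)\) are equivalences. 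This gives (3).

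The substantial direction is (3) \(\Rightarrow\) (2). First I would observe that for every \(D \in L\C\) the component \(\alpha_D\) is an equivalence: writing \(D\) as (equivalent to) \(LC'\), this is exactly the hypothesis that \(\alpha_{LC'}\) is invertible, and equivalences are stable under equivalence of objects. To produce the adjunction \(L \dashv i\), where \(i\colon L\C\hookrightarrow \C\) is the inclusion, I would then invoke Lemma \ref{lem:construction of ladj} with \(R := i\) and \(L_0 := L\) on objects: it suffices to exhibit equivalences \(\C(Lc,d) \to \C(c,d)\), natural in \(d \in L\C\), and the natural candidate is precomposition \((\alpha_c)^*\). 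The resulting left adjoint agrees with \(L\) on objects and carries \(\alpha\) as its unit, which identifies it with \(L\) and yields (2).

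The heart of the matter — and the step I expect to be the main obstacle — is verifying that \((\alpha_c)^*\colon \C(Lc,d) \to \C(c,d)\) is an equivalence for \(d \in L\C\). A first reduction is clean: naturality of \(\alpha\) gives \((\alpha_{Lc})^* \circ L = (\alpha_d)_*\) on \(\C(Lc,d)\), and since \((\alpha_{Lc})^*\) and \((\alpha_d)_*\) are equivalences (by (3) and the previous step), two-out-of-three shows \(L\colon \C(Lc,d)\to \C(LLc,Ld)\) is an equivalence. The difficulty is that assembling these pointwise facts into an equivalence of the relevant mapping spaces forces one to compare the two a priori distinct maps \(L\alpha_c\) and \(\alpha_{Lc}\colon Lc\to LLc\); the naturality square only shows they agree after precomposition with \(\alpha_c\), and promoting this to a coherent identification \(L\alpha_c \simeq \alpha_{Lc}\) is precisely the coherent idempotency of \(L\) that condition (3) is designed to encode. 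I would isolate this as a separate lemma, proving it by a careful homotopy-coherent argument (or by transporting to the marked-simplicial-category model, where the unit can be rectified), and then feed it back into the two-out-of-three analysis to conclude that \((\alpha_c)^*\) is an equivalence, closing the cycle.
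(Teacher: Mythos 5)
This proposition is quoted in the paper verbatim from \cite{HTT} (Proposition 5.2.7.4) and is given no proof there, so the only meaningful benchmark is Lurie's argument. Your cycle (2)\(\Rightarrow\)(1)\(\Rightarrow\)(3)\(\Rightarrow\)(2) is a reasonable decomposition, and the first two implications are correct: (2)\(\Rightarrow\)(1) is formal, and (1)\(\Rightarrow\)(3) correctly combines the triangle identities with the fact that full faithfulness of the right adjoint amounts to invertibility of the counit.

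The gap is in (3)\(\Rightarrow\)(2), which you yourself flag as incomplete: the verification that \((\alpha_c)^{*}\colon \C(Lc,d)\to\C(c,d)\) is an equivalence for \(d\in L\C\) is deferred to an unproven lemma asserting a coherent identification \(L\alpha_c\simeq\alpha_{Lc}\). That lemma is a red herring: it is not needed, and establishing it coherently from (3) alone is itself delicate (in practice one derives it \emph{after} the adjunction exists). The argument closes using only that \(L\alpha_c\) and \(\alpha_{Lc}\) are \emph{separately} invertible, which is exactly what (3) provides. You already showed, from \((\alpha_{Lc})^{*}\circ L\simeq(\alpha_d)_{*}\) on \(\C(Lc,d)\), that \(L\colon\C(Lc,d)\to\C(LLc,Ld)\) is an equivalence. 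Functoriality of \(L\) then identifies \((L\alpha_c)^{*}\circ L\) with \(L\circ(\alpha_c)^{*}\) as maps \(\C(Lc,d)\to\C(Lc,Ld)\) (both send \(u\) to \(L(u\circ\alpha_c)\)); the former is a composite of two equivalences, so \((\alpha_c)^{*}\) admits a left homotopy inverse. On the other hand, naturality of \(\alpha\) at morphisms \(c\to d\) gives \((\alpha_c)^{*}\circ L\simeq(\alpha_d)_{*}\) on \(\C(c,d)\), which is an equivalence because \(\alpha_d\) is one for \(d\in L\C\); after the harmless identification of \(\C(-,d)\) with \(\C(-,Ld)\) via \((\alpha_d)_{*}\), this says \((\alpha_c)^{*}\) also admits a right homotopy inverse, hence is an equivalence. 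Feeding this family of equivalences, natural in \(d\), into Lemma \ref{lem:construction of ladj} as you propose then yields (2). With this replacement your proof is complete; as written, the central step is missing and the route you suggest for it would be harder than the problem itself.
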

The next result show that this kind of localizations is indeed an instance of a more general one, in which we formally invert a given family of morphisms.
\begin{prop}[Prop.5.2.7.12, \cite{HTT}]
Let \(\C\) be an \(\infty\)-category and let \(L\colon \C \to \C\) be a localization functor with essential image \(L\C\). Let \(\mathcal{S}\) denote the collection of all morphisms \(f\) in \(\C\) such that \(Lf\) is an equivalence. Then, for every \(\infty\)-category \(\D\), composition with \(L\) induces a fully faithful functor 
\[\psi\colon \D^{L\C} \to \D^{\C}\]
whose essential image consists of all those map \(F\colon\C \to \D\) such that \(F(f)\) is an equivalence in \(\D\) for every morphisms \(f\) in \(\mathcal{S}\).
\end{prop}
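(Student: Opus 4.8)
The plan is to deduce the statement from the adjunction that defines a localization, transported to functor $\infty$-categories via the contravariant $2$-functor $\D^{(-)}$. By condition (2) of the preceding proposition (Proposition 5.2.7.4 of \cite{HTT}), the corestriction $\bar{L}\colon \C \to L\C$ of $L$ to its essential image is left adjoint to the inclusion $i\colon L\C \hookrightarrow \C$, and $L = i\circ\bar{L}$. Write $\eta\colon \Id_{\C}\Rightarrow i\bar{L}$ and $\varepsilon\colon \bar{L}i \Rightarrow \Id_{L\C}$ for the unit and counit; since $i$ is fully faithful, $\varepsilon$ is an equivalence. The functor $\psi$ is precomposition with $\bar{L}$, that is $\psi = \bar{L}^{*}$. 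The first step is to observe that $\D^{(-)}$, being contravariant on functors and covariant on natural transformations, carries the adjunction $\bar{L}\dashv i$ to an adjunction $i^{*}\dashv \bar{L}^{*}$ between $\D^{\C}$ and $\D^{L\C}$, with unit $\eta^{*}\colon \Id_{\D^{\C}}\Rightarrow \bar{L}^{*}i^{*}$ and counit $\varepsilon^{*}\colon i^{*}\bar{L}^{*}\Rightarrow \Id_{\D^{L\C}}$ obtained by whiskering with $\eta$ and $\varepsilon$ respectively.

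For full faithfulness of $\psi = \bar{L}^{*}$, I would invoke the standard fact that a right adjoint is fully faithful precisely when the counit of its adjunction is an equivalence. Here the counit of $i^{*}\dashv\bar{L}^{*}$ is $\varepsilon^{*}$, which is an equivalence because $\varepsilon$ is and precomposition preserves equivalences of functors. Hence $\psi$ is fully faithful.

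For the essential image I would argue by two inclusions. First, every $F$ in the image of $\psi$ inverts $\mathcal{S}$: writing $F\simeq G\circ\bar{L}$, for $s\in\mathcal{S}$ the morphism $Ls = i\bar{L}(s)$ is an equivalence, and since a fully faithful functor is conservative, $\bar{L}(s)$ is an equivalence, whence $F(s) = G\bar{L}(s)$ is too. Conversely, for the adjunction $i^{*}\dashv\bar{L}^{*}$ with fully faithful right adjoint, an object $F\in\D^{\C}$ lies in the essential image of $\bar{L}^{*}$ if and only if the unit $\eta^{*}_{F}\colon F \to \bar{L}^{*}i^{*}F$ is an equivalence. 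Using $\bar{L}^{*}i^{*}F = F\circ i\bar{L} = F\circ L$, this unit is identified with the whiskering $F\eta\colon F \Rightarrow F\circ L$, whose component at an object $C$ is $F(\eta_{C})\colon FC \to F(LC)$. By condition (3) of the preceding proposition, $L(\eta_{C})$ is an equivalence, so $\eta_{C}\in\mathcal{S}$ for every $C$; therefore, if $F$ inverts $\mathcal{S}$, then each $F(\eta_{C})$ is an equivalence, the natural transformation $\eta^{*}_{F}$ is a pointwise equivalence and hence an equivalence, and $F$ lies in the essential image of $\psi$.

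The routine parts of this argument are genuinely formal in ordinary category theory; the substantive work in the $\infty$-categorical setting, and the point I expect to be the main obstacle, is justifying that $\D^{(-)}$ is a $2$-functor so that the adjunction and its unit and counit transport with the correct variance, and invoking the general principle that an object lies in the essential image of a fully faithful right adjoint exactly when the relevant unit is an equivalence. Pinning down that the transported unit $\eta^{*}_{F}$ really is the whiskering $F\eta$ componentwise, together with the fact that a natural transformation which is a pointwise equivalence is itself an equivalence (by Theorem \ref{thm:ess surj+ff=eq} applied objectwise), is the crux that makes the essential-image computation go through.
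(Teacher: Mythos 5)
The paper does not actually prove this statement: it is quoted verbatim as Proposition 5.2.7.12 of \cite{HTT} and used as background, so there is no in-paper proof to compare against. Your argument is correct and is essentially the standard one (and the one in \cite{HTT}): transport the reflection $\bar{L}\dashv i$ along $\D^{(-)}$ to get $i^{*}\dashv\bar{L}^{*}$ with invertible counit, then identify the essential image via the unit $F\eta$, whose components lie in $\mathcal{S}$ by condition (3) of Proposition 5.2.7.4. The only quibble is a mis-citation at the end: the fact that a pointwise equivalence of functors is an equivalence is not Theorem \ref{thm:ess surj+ff=eq} applied objectwise, but a separate (standard) property of the functor $\infty$-category $\D^{\C}$; this does not affect the correctness of the argument.
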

We can now give the following characterization of cartesian fibrations of \(\infty\)-categories in terms of localization functors.
\begin{prop}
	\label{prop:cart fib characterization}
Let \(p\) be an isofibration of \(\infty\)-categories. Then \(p\) is a cartesian fibration if and only if the map \(p_x\colon X_{/x}\to Y_{/p(x)}\) admits a fully faithful right adjoint \(q_x\).
\end{prop}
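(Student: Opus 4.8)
The plan is to prove both implications by translating between two descriptions of the relevant mapping \(\infty\)-groupoids: on the one hand, the cartesianness of a \(1\)-simplex is governed by the pullback squares of Proposition \ref{prop:p-cart with hopull}; on the other hand, the counit of an adjunction \(p_x \dashv q_x\) and the fully faithfulness of \(q_x\) are detected on the hom-\(\infty\)-groupoids of the slices. The bridge between the two is the standard identification of the mapping space in an over-category,
\[X_{/x}\bigl((a \xrightarrow{s} x),(b \xrightarrow{t} x)\bigr)\simeq \fib_s\bigl(X(a,b)\xrightarrow{t\circ-}X(a,x)\bigr),\]
and likewise in \(Y_{/p(x)}\), where the fibre is taken over the vertex \(s\).

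For the forward direction, assume \(p\) is a cartesian fibration. I would define \(q_x\) on objects by sending \(v = (y \xrightarrow{h} p(x))\) to a chosen cartesian lift \(\bar h\colon x' \to x\) of \(h\), so that \(p(\bar h)=h\) and \(p(x')=y\). Applying Proposition \ref{prop:p-cart with hopull} to \(\bar h\), with the vertex \(z\) ranging over the domains of objects of \(X_{/x}\), the cartesian pullback square identifies the fibre \(X_{/x}(u,q_x v)\) with \(Y_{/p(x)}(p_x u, v)\), naturally in \(u\). This is exactly the input required by (the evident dual of) Lemma \ref{lem:construction of ladj}, which upgrades the object-assignment \(q_x\) to a genuine right adjoint of \(p_x\). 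Finally, a second application of the same pullback square, now at the vertex \(z=x'\) to compute \(X_{/x}(q_x v, q_x v')\) and using \(p(x')=y\), identifies this with \(Y_{/p(x)}(v,v')\); since the identification is the map induced by \(q_x\), the functor \(q_x\) is fully faithful. (Equivalently, \(p_x q_x v = v\) on objects, so the counit is an equivalence.)

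For the converse, assume each \(p_x\) admits a fully faithful right adjoint \(q_x\), and let \(h\colon y \to p(x)\) be a \(1\)-simplex; write \(v=(y\xrightarrow{h}p(x))\in Y_{/p(x)}\) and \(q_x v = (x' \xrightarrow{\bar h} x)\). I claim \(\bar h\) is \(p\)-cartesian, which I verify through Proposition \ref{prop:p-cart with hopull}: for every vertex \(z\), the fibre of \(X(z,x')\xrightarrow{\bar h\circ-}X(z,x)\) over a vertex \(g\) is \(X_{/x}(u_g, q_x v)\) with \(u_g=(z\xrightarrow{g}x)\), while the fibre of \(Y(pz,px')\xrightarrow{p\bar h\circ-}Y(pz,px)\) over \(pg\) is \(Y_{/p(x)}(p_x u_g, p_x q_x v)\), and the comparison between them is the action \((p_x)_{u_g,q_x v}\). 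Because \(q_x\) is fully faithful the counit \(\epsilon_v\colon p_x q_x v \to v\) is an equivalence; since the adjunction equivalence \(X_{/x}(u_g,q_x v)\simeq Y_{/p(x)}(p_x u_g, v)\) factors as \((p_x)_{u_g,q_x v}\) followed by postcomposition with \(\epsilon_v\), the map \((p_x)_{u_g,q_x v}\) is itself an equivalence; hence every such square is a pullback and \(\bar h\) is cartesian. The counit also exhibits \(p(\bar h)\simeq h\) up to an equivalence of the base, and since \(p\) is an isofibration this equivalence lifts, allowing me to replace \(\bar h\) by an equivalent (still cartesian) \(1\)-simplex lifting \(h\) strictly; together with the inner fibration hypothesis this exhibits \(p\) as a cartesian fibration.

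I expect the main obstacle to be the careful bookkeeping in the two mapping-space computations: identifying the homotopy fibres of the postcomposition maps with the correct slice hom-\(\infty\)-groupoids over the correct basepoints, and checking that the comparison maps produced by \(p\) coincide with the adjunction equivalence and with the action of \(q_x\), rather than merely being abstractly equivalent. Making these identifications natural in \(u\), so that Lemma \ref{lem:construction of ladj} genuinely applies in the forward direction, and strictifying the lift via the isofibration property in the converse are the two places where real care is needed.
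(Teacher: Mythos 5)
Your proposal is correct and follows essentially the same route as the paper: both directions hinge on Proposition \ref{prop:p-cart with hopull}, the identification of slice mapping spaces as fibres of the postcomposition maps, and (the dual of) Lemma \ref{lem:construction of ladj} to promote the object-level cartesian-lift assignment to a right adjoint. The only differences are cosmetic --- the paper strictifies by replacing \(q_x\) with a section of \(p_x\) at the outset rather than lifting the counit equivalence at the end, and you spell out the full-faithfulness check that the paper leaves implicit.
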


\begin{rmk}
In other words, this result is saying that an isofibration of \(\infty\)-categories is a cartesian fibration if and only if the induced maps \(p_x\colon X_{/x}\to Y_{/p(x)}\) are localizations. Although the two statements are equivalent under the axiom of choice, it is clear from the proof that \(p\) is also endowed with a prescribed choice of \(p\)-cartesian lifts.
\end{rmk}

\begin{proof}
Suppose the right adjoints \(q_x\) exist for every vertex \(x\) in \(X\). Firstly, we observe that \(p_x\) is also an isofibration of \(\infty\)-categories. Therefore, we can assume, without loss of generality, that \(p_x\circ q_x=1_{Y_{/p(x)}}\). Indeed, we can consider the following commutative square:
\[\begin{tikzcd}
\{0\}\ar[d,hook] \ar[r,"q_x"] & {X_{/x}}^{Y_{/p(x)}}\ar[d,"(p_{/x})^*"]\\
J \ar[r,"\epsilon_x"] & {Y_{/p(x)}}^{Y_{/p(x)}}
\end{tikzcd}\] where \(\epsilon_x\) denotes the natural equivalence \(q_{x}\circ p_{x}\simeq 1_{Y_{/p(x)}}\) which features as the counit of the adjunction \(p_{x}\dashv q_{x}\), and \((p_{/x})^*\) is post-composition by \(p_x\). A lift for this square provides a map \(q'_x\colon Y_{/p(x)} \to X_{/x}\) which is equivalent to \(q_{x}\) and such that \(p_x\circ q'_x=1_{Y_{/p(x)}}\).

Now, given a 1-simplex \(h\colon y \to p(x)\) in \(Y\), set \(\bar{h}\stackrel{\text{def}}{=}q_x(h)\colon x' \to x\). By construction we have \(p(\bar{h})=h\), so let us show that \(\bar{h}\) is also \(p\)-cartesian. We have to show that the following square is a pullback in the \(\infty\)-category of \(\infty\)-groupoids:
\[\begin{tikzcd}
X(z,x')\ar[r,"q_x(h)\circ -"] \ar[d,"p_{z,x'}"{swap}] & X(z,x)\ar[d,"p_{z,x}"]\\
Y(pz,px') \ar[r,"h\circ -"] & Y(pz,px)
\end{tikzcd}\] this square can be modeled by a commutative square of Kan complexes where the vertical maps are Kan fibrations. Therefore, it is enough to show that the induced map at the level of fibers is a homotopy equivalence of Kan complexes. Equivalently, we can prove that there is an induced homotopy equivalence between the (homotopy) fibers of the horizontal maps. We have, by definition, the following (homotopy) pullback squares of simplicial sets, computed with respect to the Kan-Quillen model structure:
\[\begin{tikzcd}
X_{/x}(g,q_x(h)) \ar[r]\ar[d] &X(z,x')\ar[d,"q_x(h)\circ-"] &&& Y_{/p(x)}(p_x(g),h)\ar[r]\ar[d]& Y(pz,px')\ar[d,"h\circ-"]\\
\Delta^0 \ar[r,"\{g\}"] &X(z,x) &&&\Delta^0 \ar[r,"\{p(g)\}"] & Y(pz,px)
\end{tikzcd}\] We can thus conclude since the map between these two fibers is the adjunction equivalence \(Y_{/p(x)}(p_x(g),h)\simeq X_{/x}(g,q_x(h))\).

Conversely, assume \(p\) is a cartesian fibration. A choice of \(p\)-cartesian lifts for every \(1\)-simplex \(h\colon y\to p(x)\) as \(h,y\) and \(x\) vary provides a assignments on vertices of the form \(\left(q_x\right)_0\colon\left( Y_{/p(x)}\right)_0\to \left(X_{/x}\right)_0\). It follows from the previous paragraph that we also have homotopy equivalences of the form:
\(Y_{/p(x)}(p_x(g),h)\simeq X_{/x}(g,q_x(h))\) for every \(h\). This implies, thanks to Lemma \ref{lem:construction of ladj}, that we can extend \(\left(q_x\right)_0\) to a map \(q_x\colon Y_{/p(x)}\to X_{/x}\) which, in addition, must be a fully faithful right adjoint of \(p_x\).
\end{proof}
\begin{cor}
	\label{cor:cart fib are loc}
Let \(p\colon X \to Y\) be a cartesian fibrations of \(\infty\)-categories. Suppose \(X\) and \(Y\) admit a terminal object and that \(p\) preserves it. Then \(p\) is a localization functor.
\end{cor}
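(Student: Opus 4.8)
The plan is to deduce the statement directly from Proposition \ref{prop:cart fib characterization} by specialising the slice characterization to the terminal object. Recall that a localization functor is precisely a map admitting a fully faithful right adjoint, and that Proposition \ref{prop:cart fib characterization} already produces such right adjoints on all the slice maps \(p_x\colon X_{/x}\to Y_{/p(x)}\). The key observation is that when \(x\) is a terminal object, these slices recover \(p\) itself, so the adjoint produced there transports to an adjoint for \(p\).

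Concretely, let \(t\) denote a terminal object of \(X\). Since \(p\) preserves terminal objects, \(p(t)\) is a terminal object of \(Y\). I would then invoke the standard fact (see \eg \cite{HTT}) that the forgetful projections
\[
\pi_X\colon X_{/t}\to X, \qquad \pi_Y\colon Y_{/p(t)}\to Y
\]
are trivial fibrations, and in particular equivalences, because every object of \(X\) (resp.\ of \(Y\)) admits an essentially unique morphism to the terminal object \(t\) (resp.\ \(p(t)\)). By construction of the slice functor the square
\[
\begin{tikzcd}
X_{/t}\ar[r,"p_t"]\ar[d,"\pi_X"{swap}] & Y_{/p(t)}\ar[d,"\pi_Y"]\\
X\ar[r,"p"] & Y
\end{tikzcd}
\]
commutes, so under the equivalences \(\pi_X\) and \(\pi_Y\) the functor \(p_t\) is identified with \(p\).

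Now, since \(p\) is a cartesian fibration, Proposition \ref{prop:cart fib characterization} yields a fully faithful right adjoint \(q_t\colon Y_{/p(t)}\to X_{/t}\) of \(p_t\). Transporting along the equivalences, the composite \(\pi_X\circ q_t\circ \pi_Y^{-1}\) (for a chosen homotopy inverse \(\pi_Y^{-1}\)) is a right adjoint of \(p\simeq \pi_Y\circ p_t\circ \pi_X^{-1}\), using that adjunctions compose and are invariant under equivalences, and that an equivalence is both left and right adjoint to its inverse. Moreover fully faithfulness is an equivalence-invariant property (by Theorem \ref{thm:ess surj+ff=eq}, or directly from its definition via the induced maps on mapping \(\infty\)-groupoids), so the transported right adjoint remains fully faithful. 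Hence \(p\) admits a fully faithful right adjoint and is therefore a localization functor.

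The step requiring the most care is the identification of \(p_t\) with \(p\): one must verify both that the projection out of a slice over a terminal object is an equivalence and that the defining square above commutes, so that the adjunction furnished on slices genuinely descends to an adjunction for \(p\). Everything else is a formal consequence of the stability of adjunctions and of fully faithfulness under equivalence.
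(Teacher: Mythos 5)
Your proof is correct and follows essentially the same route as the paper: both identify \(p\) with the slice map \(p_t\colon X_{/t}\to Y_{/p(t)}\) over the terminal object (using that the slice projections are equivalences) and then invoke Proposition \ref{prop:cart fib characterization} to obtain the fully faithful right adjoint. Your write-up simply spells out the commuting square and the transport of the adjunction in more detail than the paper does.
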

\begin{proof}
Let \(\ast_X\) (resp. \(\ast_Y\)) denote the terminal object of \(X\) (resp. \(Y\)). Then \(p\) is equivalent to the map \(p_{\ast_X}\colon X_{/\ast_X} \to Y_{/p(\ast_Y)}\), since \(p(\ast_X)\simeq \ast_Y\). This map is a localization functor thanks to the previous result, so we can conclude. 
\end{proof}

\section{Factorization systems}
In this section we recall the fundamental definitions for factorizations systems on \(\infty\)-categories, we prove that the two definitions available in the literature are equivalent and we show that cartesian fibrations always induce a factorization system on the total category.
\begin{defi}[\cite{HTT}]
	\label{defi:orthogonal maps}
Suppose given maps \(f\colon a \to b\) and \(g\colon x\to y\) in an \(\infty\)-category \(\C\). Then we say that \(f\) is \emph{left orthogonal} to \(g\) (and that \(g\) is \emph{right orthogonal} to \(f\)) if the following square is a pullback in the \(\infty\)-category of \(\infty\)-groupoids.
\[\begin{tikzcd}
\C(b,x)\ar[r,"g\circ -"] \ar[d,"-\circ f"{swap}] & \C(b,y)\ar[d,"-\circ f"]\\
\C(a,x) \ar[r,"g\circ-"] & \C(a,y)
\end{tikzcd}\] If this is the case, then we denote this relation by \(f\perp g\).
\end{defi}

\begin{rmk}
Informally speaking, this definition is saying that for every given commutative square in \(\C\) of the form:
\[\begin{tikzcd}
a \ar[r] \ar[d,"f"{swap}] & x\ar[d,"g"]\\
b\ar[r] \ar[ur,"h",dotted]&y
\end{tikzcd}\] there exists a lift as indicated by the dotted arrow \(h\), which is unique up to a contractible space (i.e. \(\infty\)-groupoid) of choices.
\end{rmk}

To make this formal, let us introduce the next idea, which is due to Joyal (see \cite{JoyalNotesQCat}). We can view squares in \(\C\) with a given lift in a coherent manner as maps \(\Delta^3\to \C\). Since \(\Delta^3\cong \Delta^1\star \Delta^1\), we get a natural inclusion \(\Delta^1\times \Delta^1\hookrightarrow \Delta^3\) which picks out the commutative square forgetting the lift. We thus get an induced isofibration \(\C^{\Delta^3}\to \C^{\Delta^1\times \Delta^1}\). Consider the following diagram, where both square are pullbacks and the right-hand side vertical maps are obtained by restriction:
\[\begin{tikzcd}
\mathrm{Lift}(f,g) \ar[r]\ar[d] & \C^{\Delta^3}\ar[d]\\
\C^{\Delta^1}(f,g) \ar[r] \ar[d] & \C^{\Delta^1 \times \Delta^1}\ar[d]\\
\Delta^0 \ar[r,"\{(f{,}g)\}"] & \C^{\{0\}\times \Delta^1}\times \C^{\{1\}\times \Delta^1}
\end{tikzcd}\]
\begin{prop}
	\label{prop:equivalent def of fact syst}
The pair of maps \((f,g)\) in \(\C\) satisfies \(f\perp g\) if and only if the induced map \(\mathrm{Lift}(f,g)\to \C^{\Delta^1}(f,g)\) is a trivial fibration of simplicial sets.
\end{prop}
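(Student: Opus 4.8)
The plan is to show that the forgetful map $\pi\colon\mathrm{Lift}(f,g)\to\C^{\Delta^1}(f,g)$ is a Kan fibration between Kan complexes which, under natural equivalences, is identified with the comparison map appearing in Definition~\ref{defi:orthogonal maps}; since a Kan fibration is a trivial fibration exactly when it is a weak equivalence, this yields the claim. First I would record that $\pi$ is such a fibration. The base $\C^{\Delta^1}(f,g)$ is by construction the mapping \(\infty\)-groupoid $\mathrm{map}_{\C^{\Delta^1}}(f,g)$ of the arrow \(\infty\)-category $\C^{\Delta^1}$, hence a Kan complex, and $\pi$ is a pullback of the isofibration $\C^{\Delta^3}\to\C^{\Delta^1\times\Delta^1}$, so it is itself an isofibration. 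Unwinding the two pullback squares (using that the embedding $\Delta^1\times\Delta^1\hookrightarrow\Delta^3$ hits every edge except $\Delta^{\{1,2\}}$ and pins the two columns $\Delta^{\{0,1\}},\Delta^{\{2,3\}}$ to $f,g$), one sees that $\mathrm{Lift}(f,g)$ is the simplicial set of maps $\phi\colon\Delta^3\to\C$ with $\phi|_{\Delta^{\{0,1\}}}=f$ and $\phi|_{\Delta^{\{2,3\}}}=g$.

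Next I would pin down the source. The spine inclusion $\Delta^{\{0,1\}}\cup\Delta^{\{1,2\}}\cup\Delta^{\{2,3\}}\hookrightarrow\Delta^3$ is inner anodyne, so $\C^{\Delta^3}\to\C^{\mathrm{Sp}_3}$ is a trivial fibration; pulling this back along the inclusion of the locus where the two outer edges equal $f$ and $g$ exhibits the edge-reading map $\mathrm{Lift}(f,g)\to\C(b,x)$, $\phi\mapsto\phi|_{\Delta^{\{1,2\}}}$, as a trivial fibration. In particular $\mathrm{Lift}(f,g)$ is a Kan complex, and since an isofibration between Kan complexes is a Kan fibration, $\pi$ is a Kan fibration. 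On the base side, evaluating a square on its top and bottom edges, with its diagonal supplying the connecting homotopy, realizes $\C^{\Delta^1}(f,g)$ as the homotopy pullback of $\C(a,x)\xrightarrow{g\circ-}\C(a,y)\xleftarrow{-\circ f}\C(b,y)$.

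It then remains to trace $\pi$ through these two equivalences. A point of $\mathrm{Lift}(f,g)$ determines its middle edge $h=\phi|_{\Delta^{\{1,2\}}}\colon b\to x$, and the thin triangles $\Delta^{\{0,1,2\}}$ and $\Delta^{\{1,2,3\}}$ exhibit its top and bottom edges as $h\circ f$ and $g\circ h$. Hence $\pi$ is identified with the canonical map $\C(b,x)\to\C(a,x)\times_{\C(a,y)}\C(b,y)$, $h\mapsto(h\circ f,\,g\circ h)$. By Definition~\ref{defi:orthogonal maps}, $f\perp g$ holds precisely when the orthogonality square is a homotopy pullback, that is, precisely when this canonical map is an equivalence, that is, precisely when $\pi$ is a weak equivalence; and since $\pi$ is a Kan fibration, it is a weak equivalence if and only if it is a trivial fibration.

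The main obstacle is this last identification: one must verify that the equivalence $\mathrm{Lift}(f,g)\xrightarrow{\ \sim\ }\C(b,x)$ and the homotopy-pullback presentation of $\C^{\Delta^1}(f,g)$ fit into a homotopy-commutative triangle over which $\pi$ becomes the stated comparison map, rather than matching only on vertices. This amounts to a careful but routine bookkeeping of the thin $2$- and $3$-simplices of $\Delta^3$ and of the homotopies they encode. Everything else is formal: the fibrancy and spine arguments, and the model-categorical fact that an acyclic Kan fibration is the same as a Kan fibration that is a weak equivalence.
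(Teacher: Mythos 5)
Your proposal is correct and follows essentially the same route as the paper: both observe that \(\pi\colon\mathrm{Lift}(f,g)\to\C^{\Delta^1}(f,g)\) is a fibration so that it suffices to show it is an equivalence, both identify \(\mathrm{Lift}(f,g)\simeq\C(b,x)\) via the inner anodyne spine inclusion \(\mathsf{Sp}^3\hookrightarrow\Delta^3\), and both identify \(\C^{\Delta^1}(f,g)\) with the pullback \(\C(a,x)\times_{\C(a,y)}\C(b,y)\) (the paper cites the end formula of Gepner--Haugseng--Nikolaus for this last step, where you argue by hand). The compatibility-of-identifications issue you flag as the ``main obstacle'' is precisely the point the paper also leaves implicit, so your treatment is, if anything, slightly more careful.
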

\begin{proof}
Since the map \(\mathrm{Lift}(f,g)\to \C^{\Delta^1}(f,g)\) is a Joyal fibration of \(\infty\)-categories, it is enough to show it is an equivalence.

The inclusion \(\mathsf{Sp}^3\hookrightarrow \Delta^3\) is an inner anodyne map, so that we have an equivalence of \(\infty\)-categories \(\C^{\Delta^3} \to \C^{\mathsf{Sp}^3}\) over \(\C^{\{0\}\times \Delta^1}\times \C^{\{1\}\times \Delta^1}\). By pulling back along the map \(\{(f,g)\}\colon\Delta^0\to \C^{\{0\}\times \Delta^1}\times \C^{\{1\}\times \Delta^1}\) we get an equivalence of \(\infty\)-categories of the form \(\mathrm{Lift}(f,g) \simeq \C(b,x)\). Therefore, we are left with proving that \(f\perp g\) if and only if \(\C^{\Delta^1}(f,g)\simeq \C(b,x)\). Thanks to Proposition 5.1 of \cite{GepnerHaugsengNikolausLax}, we can express the term on the left-hand side by means of an end, as follows:
\[\C^{\Delta^1}(f,g)\simeq \int_{x \in \Delta^1}\C(fx,gx)\simeq \C(a,x)\times_{\C(a,y)} \C(b,y)\]
Hence, \(\C^{\Delta^1}(f,g)\simeq \C(b,x)\) is equivalent to having the pullback square as in Definition \ref{defi:orthogonal maps}.
\end{proof}
Let us now introduce the notion of factorization system.
\begin{defi}[\cite{HTT}, Def.5.2.8.8 and \cite{JoyalNotesQCat}, Section 24]
A factorization system on an \(\infty\)-category \(\C\) consists of a pair \(\mathcal{L},\mathcal{R}\) of collection of morphisms in \(\C\) satisfying the following properties:
\begin{enumerate}
\item Both families \(\mathcal{L}\) and \(\mathcal{R}\) are closed under retracts.
\item Every morphism in \(\mathcal{L}\) is left orthogonal to every morphism in \(\mathcal{R}\) (a fact which we will denote by \(\mathcal{L}\perp \mathcal{R}\)).
\item For every morphism \(h\colon x\to z\) in \(\C\) there is a 2-simplex in \(\C\) of the form:
\[\begin{tikzcd}
x \ar[dr,"f"{swap}]\ar[rr,"h"]&& z\\
&y \ar[ur,"g"{swap}] &
\end{tikzcd}\]
with \(f\in \mathcal{L}\) and \(g\in \mathcal{R}\)
\end{enumerate}
\end{defi}
For elements of the theory of factorization systems on \(\infty\)-categories we suggest the read of Section 5.2.8 of \cite{HTT}. 

The (dual of the) following result is stated without proof as Example 5.2.8.15 in \cite{HTT}.
\begin{prop}
	\label{prop:fact sys from cart fib}
Let \(p\colon \E\to\B\) be a cartesian fibration of \(\infty\)-categories. Then we get an induced factorization system \((p_L,p_R)\) on \(\E\), where \(p_L\) is the class of morphisms which are sent to equivalences by \(p\), and \(p_R\) is the class of \(p\)-cartesian morphisms.
\end{prop}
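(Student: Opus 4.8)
The plan is to verify in turn the three conditions in the definition of a factorization system for the pair $(p_L,p_R)$, with $\E$ playing the role of the ambient $\infty$-category. I will use freely the mapping-space characterization of $p$-cartesian edges from Proposition~\ref{prop:p-cart with hopull}, the pasting law for pullbacks of $\infty$-groupoids, and the elementary facts that equivalences of $\infty$-groupoids are closed under retracts and that a commuting square both of whose parallel edges are equivalences is automatically a pullback.

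For the factorization condition, given $h\colon x\to z$ in $\E$ I would first choose a $p$-cartesian lift $\bar g\colon y\to z$ of $p(h)\colon p(x)\to p(z)$, which exists since $p$ is a cartesian fibration. Taking the degenerate $2$-simplex $s_0(p(h))$ in $\B$ as the base — its $\{0,1\}$-edge is $\mathrm{id}_{p(x)}$ and its $\{1,2\}$- and $\{0,2\}$-edges are $p(h)=p(\bar g)$ — and the horn $\Lambda^2_2\to\E$ with $\{0,2\}$-edge $h$ and $\{1,2\}$-edge $\bar g$ as the top, the defining lifting property of the $p$-cartesian edge $\bar g$ yields a $2$-simplex in $\E$ exhibiting $h\simeq\bar g\circ f$ with $p(f)=\mathrm{id}_{p(x)}$. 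Hence $f\in p_L$ and $\bar g\in p_R$, which is the required factorization $x\xrightarrow{f}y\xrightarrow{\bar g}z$.

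The orthogonality condition is the crux. Fix $f\colon a\to b$ in $p_L$ and $g\colon x\to y$ in $p_R$; I must show the square of Definition~\ref{defi:orthogonal maps}, formed with the $\E$-mapping spaces, is a pullback. The strategy is two applications of the pasting law. Since $g$ is $p$-cartesian, Proposition~\ref{prop:p-cart with hopull} makes the squares
\[\begin{tikzcd}
\E(z,x)\ar[r,"g\circ-"]\ar[d,"p"{swap}] & \E(z,y)\ar[d,"p"]\\
\B(pz,px)\ar[r,"p(g)\circ-"] & \B(pz,py)
\end{tikzcd}\]
pullbacks for both $z=a$ and $z=b$; and since $f\in p_L$ the map $p(f)$, hence $-\circ p(f)$, is an equivalence, so the orthogonality square for $(p(f),p(g))$ in $\B$ has both vertical edges equivalences and is therefore a pullback. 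Stacking the $z=b$ cartesian square on top of this $\B$-orthogonality square shows, by the pasting law, that the outer rectangle
\[\begin{tikzcd}
\E(b,x)\ar[r,"g\circ-"]\ar[d] & \E(b,y)\ar[d]\\
\B(pa,px)\ar[r,"p(g)\circ-"] & \B(pa,py)
\end{tikzcd}\]
is a pullback, its vertical maps being $q\mapsto p(q\circ f)=p(q)\circ p(f)$. This same rectangle is also the desired $\E$-orthogonality square stacked on top of the $z=a$ cartesian square, and the latter is a pullback; a second application of the pasting law then forces the $\E$-orthogonality square to be a pullback, i.e.\ $f\perp g$. The step I expect to require the most care is precisely this assembly: one must check that the two ways of building the outer rectangle genuinely agree — which rests on the functoriality identity $p(q\circ f)=p(q)\circ p(f)$ — and that the pasting law is being applied to diagrams that commute coherently as diagrams of $\infty$-groupoids, rather than merely up to an unspecified homotopy.

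For closure under retracts, the case of $p_L$ is immediate, since a retract of $f'\in p_L$ maps under the functor $p$ to a retract of the equivalence $p(f')$, which is again an equivalence. For $p_R$ I would again invoke Proposition~\ref{prop:p-cart with hopull}: a retract of a $p$-cartesian edge induces, for each object $z$, a retract of the corresponding mapping-space squares, hence a retract of the comparison map to the pullback; as that comparison map is an equivalence for the $p$-cartesian edge and equivalences are closed under retracts, it is an equivalence for the retract as well, so the retract is $p$-cartesian. This completes the three conditions, the remaining work being a routine assembly of the cartesian pullback squares with the equivalence furnished by $f\in p_L$.
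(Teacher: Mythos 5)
Your proposal is correct and follows essentially the same route as the paper: the same $\Lambda^2_2$-lifting argument over $s_0(p(h))$ for the factorization, the same retract argument via Proposition~\ref{prop:p-cart with hopull} and closure of pullbacks (equivalently, of equivalences) under retracts, and the same pasting argument for orthogonality, which the paper packages as a single cube whose front, left, and right faces are pullbacks while you unfold it into two explicit applications of the pasting law.
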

\begin{proof}
The fact that \(p_L\) is closed under retract is obvious, and for \(p_R\) it follows from Proposition \ref{prop:p-cart with hopull} together with the fact that pullback squares are closed under retracts. Indeed, suppose \(f\) is a retract of a \(p\)-cartesian morphism \(g\). The square in \eqref{eq:p-cart with hopull} for \(f\) is a retract of the analogous square for \(g\), so the same holds for the comparison map \(\E(z,x')\to \E(z,x)\times_{\B(pz,px)}\B(pz,px')\), which must then be a retract of an equivalence. It follows that such map must be an equivalence as well, so \(f\) is \(p\)-cartesian.

Next, suppose \(f\in p_L\) and \(g\in p_R\). Consider the following cube (where we have omitted some of the obvious maps for sake of clarity):
\[
\begin{tikzcd}[row sep=scriptsize, column sep=scriptsize]
& \E(b,c) \arrow[dl,"p_{b,c}"{swap}] \arrow[rr,"-\circ f"] \arrow[dd,"g\circ-"{swap}] & & \E(a,c)\arrow[dl] \arrow[dd,"g\circ-"] \\
\B(pb,pc) \arrow[rr,"-\circ p(f)", crossing over] \arrow[dd,"p(g)\circ -"{swap}]  & &  \B(pa,pc) \\
& \E(b,d) \arrow[dl,"p_{b,d}"] \arrow[rr,"-\circ f"] & & \E(a,d) \arrow[dl,"p_{a,d}"] \\
\B(pb,pd) \arrow[rr,"-\circ p(f)"] & & \B(pa,pd) \arrow[from=uu, crossing over]\\
\end{tikzcd}\] The front face is a pullback since \(p(f)\) is an equivalence by assumption. The left-hand side and right-hand side faces are pullbacks since \(g\) is \(p\)-cartesian, so it follows that the back face must be a pullback as well, i.e. \(f\perp g\).

Finally, assume given a morphism \(h\colon a\to b\) in \(\E\). Let \(\bar{h}\colon a' \to b\) be a \(p\)-cartesian lift of \(p(h)\), and consider the following lifting problem:
\[\begin{tikzcd}
\Lambda^2_2 \ar[d,hook] \ar[r,"(h{,}\bar{h})"] & \E\ar[d,"p"]\\
\Delta^2 \ar[r,"s^0(p(h))"] & \B
\end{tikzcd}\]
Since \(\bar{h}\) is \(p\)-cartesian, we get a lift \(H\colon \Delta^2 \to \E\) which is easily seen to be the factorization of \(h\) we are looking for.
\end{proof}
The following result describes a pretty common situation in which a factorization system induces a localization functor.
\begin{prop}
	\label{prop: localization with 1}
Let \(\E\) be an \(\infty\)-category endowed with a factorization system \((\mathcal{L},\mathcal{R})\). Suppose \(\E\) admits a terminal object \(\ast\in \E\) and let \(L\E\) be the full subcategory of \(\E\) spanned by those objects \(x\) such that the map \(!_x\colon x \to \ast\) belongs to \(\mathcal{R}\). Then, we get a localization functor \(L\colon \E \to \E\) which exhibits \(L\E\) as a localization of \(\E\). Moreover, if \(\mathcal{L}\) has the two-out-of-three property, then this is the localization at the class of maps \(\mathcal{L}\).
\end{prop}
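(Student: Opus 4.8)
The plan is to build the reflection onto \(L\E\) objectwise from the given factorization system and then promote it to a functor using Lemma \ref{lem:construction of ladj}. First I would, for each object \(x\in\E\), factor the terminal map \(!_x\colon x\to\ast\) as \(x\xrightarrow{\eta_x}Lx\to\ast\) with \(\eta_x\in\L\) and \(Lx\to\ast\in\R\); by construction \(Lx\) lies in \(L\E\), and this produces the object assignment \(x\mapsto Lx\) together with the candidate unit \(\eta_x\). The key computation is the reflection property: for every \(y\in L\E\) the map \(-\circ\eta_x\colon\E(Lx,y)\to\E(x,y)\) is an equivalence of \(\infty\)-groupoids. This is immediate from the orthogonality \(\eta_x\perp{!_y}\) (Definition \ref{defi:orthogonal maps}): in the defining pullback square the right-hand vertical map \(\E(Lx,\ast)\to\E(x,\ast)\) is an equivalence of contractible spaces, since \(\ast\) is terminal, so the left-hand map \(-\circ\eta_x\) is an equivalence as well. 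Precomposition is natural in \(y\), so these equivalences assemble into exactly the data required by Lemma \ref{lem:construction of ladj}, with the role of the right adjoint played by the (fully faithful) inclusion \(\iota\colon L\E\hookrightarrow\E\); the lemma then extends \(x\mapsto Lx\) to a genuine left adjoint \(L'\colon\E\to L\E\) with \(L'\dashv\iota\). I would stress that this is exactly where the lemma earns its keep: only the objectwise factorization is needed as input, and no a priori functoriality of the factorization has to be assumed.

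Since \(\iota\) is fully faithful, the composite \(L\stackrel{\mathrm{def}}{=}\iota\circ L'\colon\E\to\E\) is a localization functor with essential image \(L\E\) in the sense of Proposition 5.2.7.4 of \cite{HTT}, which settles the first assertion.

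For the second assertion, assume \(\L\) satisfies two-out-of-three. By Proposition 5.2.7.12 of \cite{HTT}, \(L\) exhibits \(L\E\) as the localization of \(\E\) at the class \(\mathcal{S}\) of morphisms \(f\) that \(L\) sends to equivalences, so it suffices to prove \(\mathcal{S}=\L\). Using the adjunction \(L'\dashv\iota\) and the fully faithfulness of \(\iota\), a morphism \(f\colon x\to x'\) lies in \(\mathcal{S}\) precisely when \(-\circ f\colon\E(x',y)\to\E(x,y)\) is an equivalence for every \(y\in L\E\). For the inclusion \(\L\subseteq\mathcal{S}\): if \(f\in\L\) then \(f\perp{!_y}\) for every local \(y\), and the same terminal-object collapse as above shows \(-\circ f\) is an equivalence, whence \(f\in\mathcal{S}\). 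For the reverse inclusion \(\mathcal{S}\subseteq\L\): the naturality triangle \(Lf\circ\eta_x\simeq\eta_{x'}\circ f\) shows that when \(Lf\) is an equivalence the left-hand side is a composite of two \(\L\)-maps (an equivalence, which lies in \(\L\), followed by \(\eta_x\in\L\)), so \(\eta_{x'}\circ f\in\L\); since \(\eta_{x'}\in\L\), two-out-of-three forces \(f\in\L\). Hence \(\mathcal{S}=\L\) and \(L\) is the localization at \(\L\).

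The main obstacle is not a single hard step but the care required in two places. First, one must extract a space-level (not merely \(\pi_0\)-level) equivalence from orthogonality and check that it is natural enough to feed into Lemma \ref{lem:construction of ladj}; this is precisely what the pullback formulation in Definition \ref{defi:orthogonal maps} together with the terminality of \(\ast\) delivers. Second, the identification \(\mathcal{S}=\L\) is where the argument becomes genuinely delicate, and the reverse inclusion is the one that truly consumes the two-out-of-three hypothesis: the forward inclusion, the closure of \(\L\) under composition, and the fact that \(\L\) contains all equivalences are formal consequences of the factorization-system axioms and are used freely above.
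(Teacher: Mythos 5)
Your proof is correct and follows essentially the same route as the paper: factor the terminal map via \((\mathcal{L},\mathcal{R})\), use orthogonality plus the contractibility of maps into \(\ast\) to get the natural equivalences \(\E(Lx,y)\simeq\E(x,y)\), and feed these into Lemma \ref{lem:construction of ladj}. Your treatment of the second assertion is in fact slightly more complete than the paper's, which only spells out the inclusion \(\mathcal{S}\subseteq\mathcal{L}\) via the naturality square and two-out-of-three, leaving \(\mathcal{L}\subseteq\mathcal{S}\) implicit.
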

\begin{proof}
We begin by factoring the map \(!_x\colon x \to \ast\) as \(x\to L_0x \to \ast\), where \(l_x\colon x \to L_0 x\) belongs to \(\mathcal{L}\) and \(L_0 x \to \ast\) belongs to \(\mathcal{R}\). In this manner, we obtain an assignment on objects of the form \(L_0\colon\E_0 \to (L\E)_0\). In order to apply Lemma \ref{lem:construction of ladj}, we have to exhibit equivalences of the form \(\E(L_0 x,y)\simeq \E(x,y)\) for every \(y\in L\E\). Consider the following commutative square:
\[
\begin{tikzcd}
\E(L_0x,y)\ar[r,"-\circ l_x"]\ar[d,"!_y\circ -"{swap}] & \E(x,y)\ar[d,"!_y\circ -"] \\
\E(L_0x,\ast) \ar[r,"-\circ l_x"]& \E(x,\ast)
\end{tikzcd}
\] By assumption, it is a pullback of \(\infty\)-groupoids, since \(l_x\perp !_y\). Because \(\ast\) is a terminal object in \(\E\), the upper horizontal map must be an equivalence, which is natural in \(y\in L\E\), so we can conclude by applying Lemma \ref{lem:construction of ladj}.

Turning to the second point, suppose \(L(f)\) is an equivalence. This implies that we have a commutative diagram in \(\E\) of the form:
\[\begin{tikzcd}
x\ar[r,"f"]\ar[d,"l_x"{swap}] & y\ar[d,"l_y"]\\
Lx \ar[r,"L(f)"]& Ly
\end{tikzcd}\] Since \(L(f)\) is an equivalence by assumption, and since \(\mathcal{L}\) is assumed to have the two-out-of-three property, we get that \(f\) belongs to \(\mathcal{L}\), which concludes the proof.
\end{proof}

\begin{defi}
Let \((\E,(\mathcal{L},\mathcal{R}))\) be a factorization system on an \(\infty\)-category \(\E\) with terminal object. The localization functor \(L\colon \E \to L\E \) described in Proposition \ref{prop: localization with 1} is called \emph{simple} if a morphism \(f\colon x 	\to y\) in \(\E\) is in \(\mathcal{R}\) if and only if the naturality square depicted below is a pullback in \(\E\).
\[\begin{tikzcd}
x \ar[r,"f"] \ar[d,"l_x"{swap}] & y\ar[d,"l_y"]\\
Lx \ar[r,"L(f)"]& Ly
\end{tikzcd}\]
\end{defi}
In case the factorization system is induced by a cartesian fibration we have the following corollary.
\begin{cor}
\label{cor:cart fact are simple}
Let \(p\colon \E \to \B\) be a cartesian fibration between \(\infty\)-categories admitting a terminal object. If \(p\) preserves such terminal object, then the localization functor \(L\colon\E \to L\E\) (which exists thanks to Proposition \ref{prop:fact sys from cart fib} and Proposition \ref{prop: localization with 1}) is simple.
\end{cor}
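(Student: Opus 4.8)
The plan is to test the pullback condition defining simplicity by mapping out of an arbitrary object and comparing the resulting square of \(\infty\)-groupoids with the one characterizing \(p\)-cartesian morphisms in Proposition \ref{prop:p-cart with hopull}. I use the standard fact that a commuting square in \(\E\) is a pullback if and only if, for every object \(z\in\E\), the square obtained by applying \(\E(z,-)\) is a pullback of \(\infty\)-groupoids. So I would fix \(z\) and analyse the image under \(\E(z,-)\) of the naturality square
\[\begin{tikzcd}
x \ar[r,"f"] \ar[d,"l_x"{swap}] & y\ar[d,"l_y"]\\
Lx \ar[r,"L(f)"]& Ly
\end{tikzcd}\]
with the goal of identifying it with the square of Proposition \ref{prop:p-cart with hopull} for \(f\).

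The key computation is that \(\E(z,Lx)\simeq \B(pz,px)\) naturally. By construction of \(L\) in Propositions \ref{prop:fact sys from cart fib} and \ref{prop: localization with 1}, the object \(Lx\) arises by factoring \(!_x\colon x\to\ast\) as \(x\xrightarrow{l_x}Lx\xrightarrow{c_x}\ast\) with \(l_x\in p_L\) and \(c_x\in p_R\), so that \(c_x\) is \(p\)-cartesian. Applying Proposition \ref{prop:p-cart with hopull} to \(c_x\) produces a pullback square relating \(\E(z,Lx),\ \E(z,\ast),\ \B(pz,p(Lx))\) and \(\B(pz,p\ast)\); since \(\ast\) is terminal in \(\E\) and, as \(p\) preserves terminal objects, \(p\ast\) is terminal in \(\B\), both \(\E(z,\ast)\) and \(\B(pz,p\ast)\) are contractible. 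Hence \(p_{z,Lx}\colon\E(z,Lx)\to\B(pz,p(Lx))\) is an equivalence, and composing with the equivalence induced by \(p(l_x)\) gives \(\E(z,Lx)\simeq\B(pz,px)\); the same argument applies to \(y\).

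With these identifications in place I would check that the image of the naturality square under \(\E(z,-)\) is exactly the square of Proposition \ref{prop:p-cart with hopull}. The composite \(\E(z,x)\xrightarrow{l_x\circ-}\E(z,Lx)\simeq\B(pz,px)\) sends \(g\) to \(p(l_x)\circ p(g)\), hence agrees with \(p_{z,x}\) up to the equivalence \(p(l_x)\circ-\); similarly \(L(f)\circ-\) corresponds to \(p(f)\circ-\) because naturality of \(l\) gives \(L(f)\circ l_x\simeq l_y\circ f\), and therefore \(p(L(f))\circ p(l_x)\simeq p(l_y)\circ p(f)\). Consequently the mapped-out naturality square is a pullback of \(\infty\)-groupoids for every \(z\) precisely when the square of Proposition \ref{prop:p-cart with hopull} is, that is, precisely when \(f\) is \(p\)-cartesian, i.e.\ \(f\in p_R=\mathcal{R}\). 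Invoking once more the equivalence ``pullback in \(\E\)'' \(\Leftrightarrow\) ``pullback after \(\E(z,-)\) for all \(z\)'' yields that \(f\in\mathcal{R}\) if and only if the naturality square is a pullback, which is exactly the simplicity condition.

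The part requiring the most care is the bookkeeping around the equivalence \(p(l_x)\): one must verify that the two equivalences \(\E(z,Lx)\simeq\B(pz,px)\) and \(\E(z,Ly)\simeq\B(pz,py)\) are compatible with the horizontal maps \(L(f)\circ-\) and \(p(f)\circ-\), and with the vertical maps \(l_x\circ-,\ l_y\circ-\) and \(p_{z,x},\ p_{z,y}\), so that the two squares genuinely coincide. This is routine naturality of \(p\) on mapping \(\infty\)-groupoids, but it is the step where an inattentive identification would slip.
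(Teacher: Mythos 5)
Your proof is correct and follows essentially the same route as the paper: both reduce the simplicity condition to comparing, after applying \(\E(z,-)\), the naturality square with the pullback square of Proposition \ref{prop:p-cart with hopull}, using the equivalence \(\E(z,Lx)\simeq \B(pz,px)\) and the fact that pullbacks in \(\E\) are detected on mapping \(\infty\)-groupoids. The only (harmless) difference is that you obtain that equivalence from the \(p\)-cartesianness of \(Lx\to\ast\) together with terminality, whereas the paper invokes the localization adjunction to identify \(\E(a,Lx)\) with \(L\E(La,Lx)\).
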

\begin{proof}
Firstly, let us observe that the localization functor coincides with \(p\) itself, thanks to the abovementioned propositions. We will use the notation \(L\colon\E \to L\E\) to stress the fact that it is a localization.

 Suppose the naturality square 
\begin{equation}
\label{nat sq}
\begin{tikzcd}
x \ar[r,"f"] \ar[d,"l_x"{swap}] & y\ar[d,"l_y"]\\
Lx \ar[r,"L(f)"]& Ly
\end{tikzcd}
\end{equation}
is a pullback in \(\E\). Let us now consider the following cube in \(\E\) (where, for sake of clarity, we have omitted some of the arrows' labels), for any object \(a\) in \(\E\):
\[
\begin{tikzcd}[row sep=scriptsize, column sep=scriptsize]
& \E(a,x) \ar[dl,"l_x\circ -"{swap}] \ar[rr,"f\circ -"] \ar[dd] & & \E(a,y)\ar[dl] \ar[dd,"L_{a,y}"] \\
\E(a,Lx) \ar[rr, crossing over] \ar[dd,"\simeq"{swap}]  & &  \E(a,Ly) \ar[dd]\\
& L\E(La,Lx) \ar[dl,"\simeq"{swap}] \ar[rr,"-\circ f"] & & L\E(La,Ly) \ar[dl,"\simeq"] \\
L\E(La,Lx) \ar[rr,"L(f)\circ -"{swap}] & & L\E(La,Ly) \ar[from=uu, crossing over]\\
\end{tikzcd}\]
The top face is obtained by applying \(\E(a,-)\) to the square \(\eqref{nat sq}\), and is thus a pullback. The front face and the bottom one are pullbacks since they both have a pair of parallel maps which are equivalences, so the back face must be a pullback as well. This proves \(f\) is \(p\)-cartesian.

Conversely, assume \(f\) is \(p\)-cartesian. Then the square
\[\begin{tikzcd}
\E(a,x) \ar[r,"f\circ -"] \ar[d,"l_x\circ -"{swap}] & \E(a,y)\ar[d,"l_y\circ -"]\\
\E(a,Lx) \ar[r,"L(f)\circ -"]& \E(a,Ly)
\end{tikzcd}\] obtained by applying \(\E(a,-)\) to the square \eqref{nat sq} is equivalent to the square
\[\begin{tikzcd}
\E(a,x) \ar[r,"f\circ -"] \ar[d,"l_x\circ -"{swap}] & \E(a,y)\ar[d,"l_y\circ -"]\\
L\E(La,Lx) \ar[r,"L(f)\circ -"]& L\E(La,Ly)
\end{tikzcd}\] which is a pullback. It follows that the former must be a pullback as well, and since this holds for every \(a\) in \(\E\) we conclude that the square \eqref{nat sq} is indeed a pullback.
\end{proof}
We conclude this section with the following result, concerning the creation of factorization systems by suitable forgetful functors.
\begin{lemma}
	\label{lem: fact on slice}
Let \(\E\) be an \(\infty\)-category endowed with a factorization system \((\mathcal{L},\mathcal{R})\). Given any object \(x\in\E\), we get a factorization system \(({\mathcal{L}}^x,{\mathcal{R}}^x)\) on \(\E_{/x}\), defined by \({\mathcal{L}}^x=p^{-1}(\mathcal{L})\) and \({\mathcal{R}}^x=p^{-1}(\mathcal{R})\). 
\end{lemma}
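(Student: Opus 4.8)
The plan is to verify, one at a time, the three defining axioms of a factorization system for the pair \((\L^x,\R^x)\) on \(\E_{/x}\), where throughout \(p\colon\E_{/x}\to\E\) denotes the forgetful functor. The main tools will be the standard identification of slice mapping spaces,
\[
\E_{/x}\bigl((u\xrightarrow{\mu}x),(v\xrightarrow{\nu}x)\bigr)\;\simeq\;\fib_{\mu}\bigl(\E(u,v)\xrightarrow{\nu\circ-}\E(u,x)\bigr),
\]
together with the orthogonality criterion of Definition \ref{defi:orthogonal maps}. Closure under retracts (axiom (1)) is immediate and I would dispose of it first: since \(p\) is a functor it preserves retract diagrams, so if \(f\) is a retract of \(g\) in \(\E_{/x}\) then \(p(f)\) is a retract of \(p(g)\) in \(\E\); as \(\L\) and \(\R\) are retract-closed, so are their preimages \(\L^x=p^{-1}(\L)\) and \(\R^x=p^{-1}(\R)\).

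For the factorization axiom (3) I would argue by horn-filling. A morphism of \(\E_{/x}\) is a \(2\)-simplex \(\tau\colon\Delta^2\to\E\) with \(\tau(2)=x\), encoding \(y\xrightarrow{h}z\xrightarrow{b}x\) with composite the structure map \(a\) of \(y\). Factoring the underlying edge in \(\E\) as \(y\xrightarrow{u}w\xrightarrow{v}z\) with \(u\in\L\), \(v\in\R\) gives a \(2\)-simplex \(\rho\), and choosing \(b\circ v\colon w\to x\) as the structure map of \(w\) (realized by a further \(2\)-simplex built from \(v\) and \(b\)) produces an inner horn \(\Lambda^3_2\to\E\) on the vertices \(y,w,z,x\) whose faces \(d^0,d^1,d^3\) are respectively the triangle \(w\to z\to x\), the original \(\tau\), and the factorization \(\rho\); one checks these agree on their shared edges. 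Filling the horn yields a \(3\)-simplex whose remaining face \(d^2\) is a morphism of \(\E_{/x}\) with underlying edge \(u\), hence in \(\L^x\), while \(d^0\) has underlying edge \(v\), hence lies in \(\R^x\); the \(3\)-simplex itself, read as a \(2\)-simplex of \(\E_{/x}\), exhibits \(\tau\) as their composite, giving the desired \((\L^x,\R^x)\)-factorization.

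The heart of the argument, and the step I expect to demand the most care, is the orthogonality axiom (2). Given \(\phi\colon P\to Q\) in \(\L^x\) and \(\psi\colon R\to S\) in \(\R^x\), with underlying maps \(\phi_0\in\L\), \(\psi_0\in\R\) and structure maps \(\alpha,\beta,\rho,\sigma\) of \(P,Q,R,S\), I must show that the square \((\star)\) of slice mapping spaces with corners \(\E_{/x}(Q,R),\E_{/x}(Q,S),\E_{/x}(P,R),\E_{/x}(P,S)\) is a pullback of \(\infty\)-groupoids. Using the fiber description above, I would realize \((\star)\) as the objectwise fiber of a map of squares \((\star\star)\to(\dagger)\): here \((\star\star)\) is the orthogonality square in \(\E\) for the pair \((\phi_0,\psi_0)\), a pullback because \(\L\perp\R\); and \((\dagger)\) is the square with corners \(\E(q_0,x),\E(q_0,x),\E(p_0,x),\E(p_0,x)\), identities horizontally and \(-\circ\phi_0\) vertically, which is a pullback as it has the form \(X=X\) over \(Y=Y\). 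Compatibility of the two squares uses exactly the coherences \(\beta\circ\phi_0\simeq\alpha\) and \(\sigma\circ\psi_0\simeq\rho\) expressing that \(\phi\) and \(\psi\) are slice morphisms, and the fibers are taken over the compatible basepoints \(\beta\) on the top and \(\alpha\) on the bottom, which together constitute a global point of \((\dagger)\).

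Finally, I would conclude by invoking the general principle that pullback squares are closed under limits among commutative squares of \(\infty\)-groupoids: since \((\star)\) is the objectwise fiber, i.e. the pullback of \((\star\star)\to(\dagger)\) along the constant point square, and all three of \((\star\star)\), \((\dagger)\) and the point square are pullbacks, it follows that \((\star)\) is a pullback square. This establishes \(\phi\perp\psi\) and hence \(\L^x\perp\R^x\), completing the verification of the three axioms. The delicate points to get right are the coherence bookkeeping needed to assemble the map of squares \((\star\star)\to(\dagger)\) and the correct matching of basepoints, rather than any deep new input beyond the orthogonality \(\L\perp\R\) already available in \(\E\).
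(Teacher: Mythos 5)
Your proof is correct and follows essentially the same route as the paper: retract-closure and the factorization axiom are formal (the paper dismisses them as obvious, while you supply the horn-filling details), and the orthogonality \(\L^x\perp\R^x\) is deduced from \(\L\perp\R\) by realizing the slice mapping-space square as a fiber of the corresponding square in \(\E\) and invoking stability of pullback squares. The paper packages this last step as a cube whose front, left-hand and right-hand faces are pullbacks, whereas you take a limit of pullback squares in the functor category; the two computations are interchangeable.
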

In the previous situation, we say that \(({\mathcal{L}}^x,{\mathcal{R}}^x)\) is \emph{created} by the projection \(p\colon\E_{/x}\to \E\).
\begin{proof}
The stability under the formation of retracts and the factorization property are obvious consequences of the same properties for \((\mathcal{L},\mathcal{R})\). To show that \({\mathcal{L}}^x\perp {\mathcal{R}}^x\), we consider the following cube: 
\[
\begin{tikzcd}[row sep=scriptsize, column sep=scriptsize]
& \E_{/x}(p_B,p_C) \arrow[dl,"p_{B,C}"{swap}] \arrow[rr,"-\circ f"] \arrow[dd,"g\circ-"{swap}] & & \E_{/x}(p_A,p_C)\arrow[dl] \arrow[dd,"g\circ-"] \\
\E(B,C) \arrow[rr,"-\circ f", crossing over] \arrow[dd,"g\circ -"{swap}]  & &  \E(A,C) \\
& \E_{/x}(p_B,p_D) \arrow[dl,"p_{B,D}"] \arrow[rr,"-\circ f"] & & \E_{/x}(p_A,p_D) \arrow[dl,"p_{A,D}"] \\
\E(B,D) \arrow[rr,"-\circ f"] & & \E(A,D) \arrow[from=uu, crossing over]\\
\end{tikzcd}\] where \(f\colon p_A \to p_B\) belongs to \( {\mathcal{L}}^x\), \(g \colon p_C \to p_D\) belongs to\( {\mathcal{R}}^x\) and \(p_I\colon I \to X\) are objects in \(\E_{/x}\). It is easy to show that the front face, the left-hand side and the right-hand side ones are all pullback of \(\infty\)-groupoids, therefore the back face must be such as well. This implies \(f\perp g\) and concludes the proof.
\end{proof}
\section{The equivalence}
In this section we will identify suitable subcategories of, respectively, the \(\infty\)-bicategory of cartesian fibrations (where we let the base vary) and that of \(\infty\)-bicategories endowed with a factorization system, and we will prove that they are equivalent.

\begin{defi}
Let \(\textsc{Fact}\) be the marked simplicial category whose objects are pairs \((\E,(\mathcal{L},\mathcal{R}))\), where \(\E\) is an \(\infty\)-category endowed with a factorization system \((\mathcal{L},\mathcal{R})\), and whose mapping \(\infty\)-category \(\textsc{Fact}((\E,(\mathcal{L},\mathcal{R})),(\D,(S'_L,S'_R)))\) between two such objects is defined as the full subcategory of \(\textsc{Cat}_{\infty}(\E,\D)\) spanned by those maps \(f\colon \E \to \D\) satisfying \(f(\mathcal{L})\subset S'_L\) and \(f(\mathcal{R})\subset S'_R\). We let \(\textbf{Fact}\) be the scaled simplicial nerve \(\mathrm{N}^{sc}(\textsc{Fact})\) of the marked simplicial category \(\textsc{Fact}\). 
\end{defi}

Note that, since \(\textsc{Fact}\) is enriched over \(\infty\)-categories by definition, we have that \(\textbf{Fact}\) is an \(\infty\)-bicategory. 

We now want to isolate a specific subcategory of this \(\infty\)-bicategory we have just defined. To achieve this, we have to introduce a condition on the localizations induced at the level of slice \(\infty\)-categories.

Suppose given an object \((\E,(\mathcal{L},\mathcal{R}))\) in \(\textbf{Fact}\), such that \(\E\) admits a terminal object and \(\mathcal{L}\) has the two-out-of-three property. Thanks to Proposition \ref{prop: localization with 1}, we know that we get a localization functor \(L\colon \E \to L\E\), which is the localization with respect to the class of maps \(\mathcal{L}\). Moreover, \(\E_{/x}\) also has a terminal object, and thanks to Lemma \ref{lem: fact on slice} we thus get a localization map \(L_x\colon \E_{/x}\to (\mathcal{R})_{/x}\), where the \(\infty\)-category on the right is the full subcategory of \(\E_{/x}\) spanned by those objects \(f\colon e\to x\) with \(f\in \mathcal{R}\).
\begin{defi}
	\label{def:cartesian objects}
An object \((\E,(\mathcal{L},\mathcal{R}))\) in \(\textbf{Fact}\) is said to be \emph{cartesian} if \(\E\) admits a terminal object, \(\mathcal{L}\) has the two-out-three property and the restriction \[L_{\vert}\colon(\mathcal{R})_{/x} \to L\E_{/Lx}\] of the functor \(L\colon \E_{/x}\to L\E_{/Lx}\) is an equivalence of \(\infty\)-categories. We denote this last condition by \((\ast)\).
\end{defi}
\begin{rmk}
	\label{rmk: cond *}
The last condition in the previous definition can be rephrased as follows. Firstly, we observe that thanks to Theorem \ref{thm:ess surj+ff=eq}, it is equivalent to the fact that \(L_{\vert}\) is an essentially surjective and fully faithful functor. Essential surjectivity is easily shown to be equivalent to requiring that for every map \(g\colon Le \to Lx\) in \(\mathcal{R}\) we can find a map \(p\colon e' \to x\) in \(\mathcal{R}\) and an equivalence \(g \simeq L(p)\) over \(Lx\). 

Fully faithfulness translates into the fact that the map induced between the (homotopy) fibers of the vertical maps of the square depicted below at every element \(f\colon e \to x\) in \(\mathcal{R}\) (resp. \(L(f)\)) is an equivalence of \(\infty\)-groupoids, when \(g\) belongs to \( \mathcal{R}\). 
\begin{equation}
\label{ff}
\begin{tikzcd}
\E(e,e')\ar[d,"g\circ -"{swap}] \ar[r,"L_{e,e'}"]&L\E(Le,Le')\ar[d,"Lg\circ -"]\\
\E(e,x)\ar[r,"L_{e,x}"] & L\E(Le,Lx)
\end{tikzcd}
\end{equation}
\end{rmk}
We are now in a position to define one of the two \(\infty\)-bicategories of interest.
\begin{defi}
Let \(\textbf{Fact}_{cart}\) be the subobject of \(\textbf{Fact}\) spanned by those \(n\)-simplices whose 1-dimensional faces consist of maps \(f\colon (\E,(\mathcal{L},\mathcal{R}))\to(\D,(S'_L,S'_R))\) between cartesian objects, which preserve the terminal object. By definition, this also implies \(f(\mathcal{L})\subset S'_L\) and \(f(\mathcal{R})\subset S'_R\).
\end{defi}
It is not too hard to see that this is indeed a well-defined \(\infty\)-bicategory, an alternative description of which is given by taking the scaled simplicial nerve of \(\textsc{Fact}_{Cart}\), defined as the subcategory of \(\textsc{Fact}\) on the cartesian objects, where we restrict the mapping \(\infty\)-categories to maps preserving the terminal objects.

We now define a suitable \(\infty\)-bicategory of cartesian fibrations and isolate from it the subcategory of interest.
\begin{defi} 
Let \(\textsc{Cart}\) be the marked simplicial category whose objects are cartesian fibrations of \(\infty\)-categories and whose mapping \(\infty\)-category \(\textsc{Cart}(p,q)\) is given by the subspace of \(\textsc{Sq}(p,q)\) (defined by the ordinary pullback depicted below) on those maps \(X\to Y\) that send \(p\)-cartesian morphisms to \(q\)-cartesian morphisms.
	\[\begin{tikzcd}
	\textsc{Sq}(p,q) \ar[d] \ar[r] & \textsc{Cat}_{\infty}(W,Z)\ar[d,"-\circ p"]\\
	\textsc{Cat}_{\infty}(X,Y) \ar[r,"q\circ -"] & \textsc{Cat}_{\infty}(X,Z)
	\end{tikzcd}\]
Since cartesian fibrations are, in particular, fibrations in the Joyal model structure, we have that \(\textsc{Cart}\) is enriched over \(\infty\)-categories.

\end{defi}
The cartesian fibration we will be interested in are introduced in the following definition.
\begin{defi}
A cartesian fibration \(p\colon X \to W\) is said to be \emph{pointed} if \(X\) and \(W\) have a terminal object and \(p\) preserves it.
\end{defi}
The next result simplifies the description of the mapping \(\infty\)-category \(\textsc{Cart}(p,q)\) in case \(p\) is a pointed cartesian fibration.
\begin{lemma}
	\label{lem:mapping space of special cart fib}
Given cartesian fibrations \(p\colon X \to W\) and \(q\colon Y \to Z\) between \(\infty\)-categories, with \(p\) pointed, we have an equivalence of \(\infty\)-categories between \(\textsc{Cart}(p,q)\) and the full subcategory of \(\textsc{Cat}_{\infty}(X,Y)\) spanned by those maps \(f\colon X\to Y\) that send \(p\)-cartesian morphisms to \(q\)-cartesian morphisms and such that if \(h\) is a morphisms in \(X\) inverted by \(p\) then \(f(h)\) is inverted by \(q\).
\end{lemma}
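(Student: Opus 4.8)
The plan is to show that the evident forgetful functor
\[
\pi\colon \textsc{Cart}(p,q)\longrightarrow \textsc{Cat}_{\infty}(X,Y),\qquad (f,g)\mapsto f,
\]
which records only the top horizontal map of a commuting square, is an equivalence onto the full subcategory described in the statement. The whole argument rests on the observation that a \emph{pointed} cartesian fibration is a localization, so that the second leg of the pullback defining \(\textsc{Sq}(p,q)\) is governed by a universal property.

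First I would record, using Corollary \ref{cor:cart fib are loc} together with the hypothesis that \(p\) is pointed, that \(p\colon X\to W\) is a localization functor; identifying \(W\) with the essential image of the associated idempotent endofunctor of \(X\), this exhibits \(W\) as the localization of \(X\) at the class \(S^p_L\) of \(p\)-inverted morphisms. Applying the universal property of this localization (Proposition 5.2.7.12 of \cite{HTT}) to the target \(\infty\)-category \(Z\), precomposition
\[
p^{*}=-\circ p\colon \textsc{Cat}_{\infty}(W,Z)\longrightarrow \textsc{Cat}_{\infty}(X,Z)
\]
is fully faithful, with essential image the full subcategory of those functors \(X\to Z\) that send every morphism in \(S^p_L\) to an equivalence.

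Next I would analyse the defining pullback \(\textsc{Sq}(p,q)=\textsc{Cat}_{\infty}(X,Y)\times_{\textsc{Cat}_{\infty}(X,Z)}\textsc{Cat}_{\infty}(W,Z)\). Since \(q\) is a categorical fibration, postcomposition \(q\circ-\) is an isofibration, so the strict pullback agrees with the homotopy pullback and we may reason \(\infty\)-categorically. Fully faithful functors are stable under base change, so pulling \(p^{*}\) back along \(q\circ-\) shows that \(\pi\colon \textsc{Sq}(p,q)\to \textsc{Cat}_{\infty}(X,Y)\) is fully faithful, and its essential image consists of exactly those \(f\colon X\to Y\) for which \(q\circ f\) lies in the essential image of \(p^{*}\), i.e.\ for which \(q\circ f\) inverts \(S^p_L\). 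Unwinding, \(q(f(h))\) is an equivalence precisely when \(f(h)\in S^q_L\), so this essential image is the full subcategory of \(\textsc{Cat}_{\infty}(X,Y)\) on those \(f\) carrying \(p\)-inverted morphisms to \(q\)-inverted morphisms; hence \(\textsc{Sq}(p,q)\) is equivalent to that full subcategory.

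Finally, \(\textsc{Cart}(p,q)\) is by definition the full subcategory of \(\textsc{Sq}(p,q)\) spanned by those \((f,g)\) whose top map \(f\) sends \(p\)-cartesian morphisms to \(q\)-cartesian morphisms. As this is again a condition on the object \(f\) alone, transporting it across the equivalence of the previous paragraph simply intersects the two object-level conditions, yielding an equivalence between \(\textsc{Cart}(p,q)\) and the full subcategory of \(\textsc{Cat}_{\infty}(X,Y)\) on those \(f\) that both preserve cartesian morphisms and invert \(p\)-inverted morphisms, which is precisely the subcategory in the statement. I expect the only delicate points to be the passage from the strict pullback to the homotopy pullback (handled by the isofibrancy of \(q\circ-\)) and the clean application of the localization universal property at the level of the mapping \(\infty\)-categories \(\textsc{Cat}_{\infty}(-,Z)\); once these are in place, stability of fully faithful functors under base change does all the work and no computation with cartesian lifts is required.
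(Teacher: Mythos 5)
Your proof is correct and follows essentially the same route as the paper's: both use Corollary \ref{cor:cart fib are loc} to see that a pointed cartesian fibration is a localization at the \(p\)-inverted morphisms, then transport the resulting fully faithful functor \(\textsc{Cat}_{\infty}(W,Z)\hookrightarrow\textsc{Cat}_{\infty}(X,Z)\) across the pullback defining \(\textsc{Sq}(p,q)\), and finally restrict to the cartesian-morphism-preserving maps. Your version merely makes explicit the base-change step (isofibrancy of \(q\circ -\) and stability of fully faithful functors under pullback) that the paper leaves implicit.
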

\begin{proof}
By Corollary \ref{cor:cart fib are loc}, we get that \(p\) must be a localization functor. In particular, it is the localization at all the maps inverted by \(p\), which means that we have an equivalence of \(\infty\)-categories of the form \(\textsc{Cat}_{\infty}(W,Z)\simeq \textsc{Cat}'_{\infty}(X,Z)\), where \(\textsc{Cat}'_{\infty}(X,Z)\) denotes the full subcategory of \(\textsc{Cat}_{\infty}(X,Z)\) spanned by those functors \(X\to Z\) that invert all the morphisms which are inverted by \(p\). This implies that \(\textsc{Sq}(p,q)\) is equivalent to the full subcategory of \(\textsc{Cat}_{\infty}(X,Y)\) spanned by those functors \(f\colon X\to Y\) such that \(q\circ f\) inverts all the morphisms inverted by \(p\). Under this identification, \(\textsc{Cart}(p,q)\) corresponds to the one in the statement.
\end{proof} 
We can now give the following definition, which isolates the subcategory of our interest.
\begin{defi}
Let \(\textsc{Cart}_{\ast}\) be the (non-full)  subcategory of the marked simplicial category \(\textsc{Cart}\) spanned by pointed cartesian fibrations, with mapping \(\infty\)-category \(\textsc{Cart}_{\ast}(p,q)\) defined as the full subcategory of \(\textsc{Cart}(p,q)\) spanned (under the identification given by Lemma \ref{lem:mapping space of special cart fib}) by those maps \(f\colon X \to Y\) which preserve the terminal object. We let \(\mathbf{Cart}_{\ast}\) be the scaled simplicial nerve \(\mathrm{N}^{sc}\textsc{Cart}_{\ast}\).
\end{defi}
\begin{rmk}
Note that, since \(\textsc{Cart}\) is enriched over \(\infty\)-categories, we have that \(\textsc{Cart}_{\ast}\) is too, so that \(\mathbf{Cart}_{\ast}\) is an \(\infty\)-bicategory.
\end{rmk}
We can now prove the main result of this work.
\begin{thm}
	\label{thm:equiv}
There is an equivalence of \(\infty\)-bicategories between \(\mathbf{Cart}_{\ast}\) and \(\mathbf{Fact}_{cart}\), which sends an object \(p\colon \E \to \B\) in \(\mathbf{Cart}_{\ast}\) to \((\E,(p_{\mathcal{L}},p_{\mathcal{R}}))\), where \(p_{\mathcal{L}}\) is the class of maps inverted by \(p\) and \(p_{\mathcal{R}}\) is the class of \(p\)-cartesian morphisms.
\end{thm}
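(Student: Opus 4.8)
The plan is to realize the object assignment \(p \mapsto (\E,(p_{\mathcal{L}},p_{\mathcal{R}}))\) as a genuine functor of marked simplicial categories \(\Phi\colon \textsc{Cart}_{\ast} \to \textsc{Fact}_{Cart}\) and to prove that \(\Phi\) is a Dwyer--Kan equivalence. Since both source and target are, by construction, enriched over \(\infty\)-categories, they are fibrant in \(\msCat\); and since \(\mathrm{N}^{sc}\) is the right adjoint in the Quillen equivalence of Theorem \ref{coherent nerve eq}, it carries a Dwyer--Kan equivalence between fibrant objects to a bicategorical equivalence. Applying \(\mathrm{N}^{sc}\) to \(\Phi\) will then produce the asserted equivalence \(\mathbf{Cart}_{\ast}\simeq \mathbf{Fact}_{cart}\). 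Everything therefore reduces to constructing \(\Phi\) and verifying that it is fully faithful and essentially surjective.

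To define \(\Phi\) on objects I send a pointed cartesian fibration \(p\colon X \to W\) to \((\E,(p_{\mathcal{L}},p_{\mathcal{R}}))\) with \(\E = X\). This lands among cartesian objects in the sense of Definition \ref{def:cartesian objects}: the pair is a factorization system by Proposition \ref{prop:fact sys from cart fib}, \(\E\) has a terminal object by pointedness, the class \(p_{\mathcal{L}}\) of \(p\)-inverted maps trivially satisfies two-out-of-three, and condition \((\ast)\) is exactly the statement that the fully faithful right adjoint furnished by Proposition \ref{prop:cart fib characterization} at the level of slices identifies \((p_{\mathcal{R}})_{/x}\) with \(L\E_{/Lx}\) (here one uses Corollary \ref{cor:cart fib are loc} to regard \(p\) as the localization \(L\)). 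On mapping \(\infty\)-categories I use the projection that sends a square in \(\textsc{Sq}(p,q)\) to its top component \(f\colon X \to Y\); this is strictly functorial in \(p,q\), and by Lemma \ref{lem:mapping space of special cart fib} it identifies \(\textsc{Cart}_{\ast}(p,q)\) with the full subcategory of \(\textsc{Cat}_{\infty}(\E,\D)\) on those \(f\) that preserve the terminal object, carry \(p\)-cartesian maps to \(q\)-cartesian maps, and carry \(p\)-inverted maps to \(q\)-inverted maps.

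Full faithfulness is then immediate: reading \(p_{\mathcal{R}}\) as the \(p\)-cartesian maps and \(p_{\mathcal{L}}\) as the \(p\)-inverted maps, the three conditions just listed are precisely the defining conditions of \(\textsc{Fact}_{Cart}(\Phi p,\Phi q)\) as a full subcategory of \(\textsc{Cat}_{\infty}(\E,\D)\), so \(\Phi_{p,q}\) is an equivalence. For essential surjectivity, given a cartesian object \((\E,(\mathcal{L},\mathcal{R}))\) I take \(p\) to be an isofibration model of the localization \(L\colon \E \to L\E\) supplied by Proposition \ref{prop: localization with 1}, and I claim it is a pointed cartesian fibration with \(p_{\mathcal{L}} = \mathcal{L}\) and \(p_{\mathcal{R}} = \mathcal{R}\), so that \(\Phi(p) = (\E,(\mathcal{L},\mathcal{R}))\) on the nose. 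That \(L\) is a cartesian fibration follows from Proposition \ref{prop:cart fib characterization}, since the slice functor \(L_x\colon \E_{/x}\to L\E_{/Lx}\) factors as the localization \(\E_{/x}\to (\mathcal{R})_{/x}\) of Lemma \ref{lem: fact on slice} and Proposition \ref{prop: localization with 1} followed by the equivalence \(L_{\vert}\) of condition \((\ast)\), and hence admits a fully faithful right adjoint. Pointedness is clear because \(\ast \in L\E\) (its map to \(\ast\) being an equivalence, so in \(\mathcal{R}\)) and \(L\ast \simeq \ast\). The second part of Proposition \ref{prop: localization with 1}, invoking two-out-of-three for \(\mathcal{L}\), gives \(p_{\mathcal{L}} = \mathcal{L}\); and since in any factorization system the right class is exactly the class right-orthogonal to the left one, the equality of left classes forces \(p_{\mathcal{R}} = p_{\mathcal{L}}^{\perp} = \mathcal{L}^{\perp} = \mathcal{R}\).

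I expect the main obstacle to be essential surjectivity, and within it the verification that \(L\) is genuinely a cartesian fibration: this is exactly the point where condition \((\ast)\) must be fed into Proposition \ref{prop:cart fib characterization} at every slice, and where one must take care to replace \(L\) by an equivalent isofibration so that the hypotheses of that proposition are met. By contrast, full faithfulness is essentially bookkeeping resting on Lemma \ref{lem:mapping space of special cart fib}, and the recovery of the two classes is formal once the fibration property has been established.
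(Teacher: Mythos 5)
Your proposal is correct and follows essentially the same route as the paper: the same object assignment checked against Definition \ref{def:cartesian objects}, essential surjectivity via Proposition \ref{prop:cart fib characterization} applied to the slice localizations, full faithfulness via the identification of Lemma \ref{lem:mapping space of special cart fib}, and passage through the scaled nerve of an equivalence of marked simplicial categories. The only place you add something the paper glosses over is the recovery of the right class via \(p_{\mathcal{R}} = p_{\mathcal{L}}^{\perp} = \mathcal{L}^{\perp} = \mathcal{R}\), which is a legitimate and welcome filling-in of a step the paper leaves implicit.
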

\begin{proof}
First, let us check that \((\E,(p_{\mathcal{L}},p_{\mathcal{R}}))\) is indeed an object of \(\textbf{Fact}_{cart}\). By hypothesis, \(\E\) has a terminal object, and by Proposition \ref{prop:fact sys from cart fib} \((p_{\mathcal{L}},p_{\mathcal{R}})\) is a factorization system on \(\E\). The two-out-of-three property for \(\mathcal{L}\) is trivially satisfied, so we are left with checking condition \((\ast)\) of Definition \ref{def:cartesian objects}. The first thing we have to check is that given a morphism \(g\colon pe \to px\), there is a \(p\)-cartesian morphism \(f\colon e' \to x\) and an equivalence \(g\simeq p(f)\) over \(px\). To get this, it is enough to pick \(f\) among the \(p\)-cartesian lifts of \(g\) with codomain \(x\).

Thanks to Remark \ref{rmk: cond *}, the next thing we have to check is that the map between the (homotopy) fibers of the vertical maps in the square \eqref{ff} over \(p\)-cartesian morphisms \(e\to x\) is an equivalence. Since the morphism \(g\) appearing there is in \(\mathcal{R}\), and is thus \(p\)-cartesian in the case at hand, that square is always a (homotopy) pullback, and therefore such comparison map between the fibers is necessarily an equivalence. This concludes the proof that  \((\E,(p_{\mathcal{L}},p_{\mathcal{R}}))\) is indeed an object in \(\textbf{Fact}_{cart}\).

The assignments on objects we have just described is essentially surjective, since given \((\E,(\L,\R))\) in \(\textbf{Fact}_{cart}\), we have that the associated localization functor \(L\colon \E \to L\E\) is a cartesian fibration which we denote by \(p\). Indeed, by assumption the induced maps \(\E_{/x}\to L\E_{/Lx}\) are localizations for every \(x\) in \(\E\), so \(p\) is a cartesian fibration thanks to Proposition \ref{prop:cart fib characterization}, and it obviously preserve the terminal object so it is pointed. The cartesian factorization system on \(\E\) induced by \(p\), which we denote by \((p_{\L},p_{\R})\), is equivalent to \((\L,\R)\) since, by Proposition \ref{prop: localization with 1}, we have that \(\L\) coincide with the class of maps inverted by \(L=p\), so \(\L=p_{\L}\) by definition. 

We can extend the assignment on objects to a map of marked simplicial categories thanks to the identification provided by Lemma \ref{lem:mapping space of special cart fib}, which provides a natural inclusion of the form \(\textsc{Cart}_{\ast}(p,q)\hookrightarrow \textsc{Cat}_{\infty}(\E,\C)\) for every pair of cartesian fibrations \(p\colon \E \to \B\) and \(q\colon \C\to\D\). The image of this inclusion can be identified with the full subcategory spanned by those maps \(f\colon \E \to \C\) which:
\begin{itemize}
\item preserve the terminal object, by assumption.
\item are such that \(f(p_{\mathcal{L}})\subset q_{\mathcal{L}}\), since by Lemma \ref{lem:mapping space of special cart fib} we have that \(q\circ f\) inverts all the maps in \(p_{\mathcal{L}}\).
\item  are such that \(f(p_{\mathcal{R}})\subset q_{\mathcal{R}}\), since morphisms of cartesian fibrations are required to preserve cartesian morphisms by definition.
\end{itemize}
Therefore, we have an equivalence of marked simplicial categories \(\textsc{Cart}_{\ast}\simeq \textsc{Fact}_{Cart}\), which induces the equivalence of \(\infty\)-categories in the statement upon application of the scaled simplicial nerve.
\end{proof}
\begin{obs}
By considering the fully faithful inclusion \(\mathrm{N}\colon \mathbf{Cat}\hookrightarrow \mathbf{Cat}_{\infty}\) we get the analogous statement of the previous theorem for ordinary categories.
\end{obs}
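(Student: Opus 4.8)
The plan is to deduce the ordinary statement from Theorem \ref{thm:equiv} by restricting the equivalence \(\Phi\) along the nerve, exploiting that \(\mathrm{N}\colon\mathbf{Cat}\hookrightarrow\mathbf{Cat}_{\infty}\) is fully faithful and compatible with all the structure in play. First I would spell out the ordinary analogues: the \(2\)-category of pointed Grothendieck fibrations of ordinary categories (with fibration- and terminal-object-preserving functors and natural transformations) on one side, and the \(2\)-category of ordinary categories equipped with a cartesian orthogonal factorization system in the sense of \cite{RosickyTholenFact} on the other. These are genuine \(2\)-categories because, for ordinary \(\C,\D\), one has \(\mathbf{Cat}_{\infty}(\mathrm{N}\C,\mathrm{N}\D)\simeq\mathrm{N}\,\mathrm{Fun}(\C,\D)\), so mapping \(\infty\)-categories between nerves are nerves of ordinary functor categories; hence the full sub-\(\infty\)-bicategories of \(\mathbf{Cart}_{\ast}\) and \(\mathbf{Fact}_{cart}\) spanned by objects of the form \(\mathrm{N}(-)\) are (nerves of) honest \(2\)-categories.

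Second, I would verify that \(\mathrm{N}\) transports the relevant structure in both directions. The nerve preserves terminal objects and pullbacks, and a functor \(p\) of ordinary categories is a Grothendieck fibration if and only if \(\mathrm{N}p\) is a cartesian fibration of \(\infty\)-categories, the \(\mathrm{N}p\)-cartesian \(1\)-simplices being exactly the nerves of the cartesian morphisms of \(p\) and the inverted maps the nerves of the isomorphisms inverted by \(p\); thus pointedness corresponds to pointedness. Likewise, via Proposition \ref{prop:equivalent def of fact syst} the \(\infty\)-orthogonality of \(\mathrm{N}f\) and \(\mathrm{N}g\) reduces, for nerves, to the ordinary unique-lifting condition, so an ordinary factorization system \((\L,\R)\) on \(\C\) corresponds to the \(\infty\)-factorization system \((\mathrm{N}\L,\mathrm{N}\R)\) on \(\mathrm{N}\C\). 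The remaining point is that the cartesian-object condition \((\ast)\) of Definition \ref{def:cartesian objects}, evaluated on \(\mathrm{N}\C\), recovers precisely the condition characterizing the factorization systems arising from the ordinary prefibrations of \cite{RosickyTholenFact}; here I would use that \((\mathrm{N}\C)_{/x}\) agrees up to equivalence with the nerve of the ordinary slice \(\C_{/x}\) and that, by Proposition \ref{prop:cart fib characterization}, \((\ast)\) amounts to the slice projections being localizations, which for nerves is the reflectivity of the ordinary slice adjunctions.

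Third, I would observe that \(\Phi\) preserves the property of lying in the image of \(\mathrm{N}\): on objects \(\Phi(\mathrm{N}p)=(\mathrm{N}\C,(\mathrm{N}(p_{\L}),\mathrm{N}(p_{\R})))\) is again a nerve, and conversely the essential-surjectivity construction applied to nerve data produces the localization \(L\), which is the nerve of the ordinary reflector. Since \(\Phi\) is the identity on mapping \(\infty\)-categories under the identification of Lemma \ref{lem:mapping space of special cart fib}, it carries the \(2\)-category of pointed Grothendieck fibrations onto the \(2\)-category of ordinary cartesian factorization systems, which is exactly the stated ordinary analogue and recovers Theorem 3.9 of \cite{RosickyTholenFact} under the relaxed completeness hypotheses.

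The step I expect to be the genuine obstacle is the second one, namely matching condition \((\ast)\) with the classical notion: one must check that slices of nerves are nerves of slices, that the induced slice localizations of a nerve are themselves nerves of ordinary reflective localizations, and that this reflectivity coincides with the idempotent-monad/prefibration condition of \cite{RosickyTholenFact}. Everything else is a formal consequence of the full faithfulness of \(\mathrm{N}\) together with Theorem \ref{thm:equiv}.
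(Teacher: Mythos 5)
The paper offers no proof of this observation---it is asserted as immediate---so there is no argument to compare against; your proposal fills in the details along the only reasonable route, namely restricting the equivalence of Theorem \ref{thm:equiv} to the full sub-\(\infty\)-bicategories spanned by nerves and checking that \(\mathrm{N}\) transports all the relevant structure. The individual checks you list are all standard and correct: \(\mathrm{N}(\D^{\C})\cong(\mathrm{N}\D)^{\mathrm{N}\C}\), \(\mathrm{N}(\C_{/x})\cong(\mathrm{N}\C)_{/x}\), \(\mathrm{N}p\) is a cartesian fibration exactly when \(p\) is a Grothendieck fibration with matching (co)cartesian edges, and orthogonality of nerves reduces to unique lifting. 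So your argument does establish the observation as stated.

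The one inaccuracy is your closing identification with Theorem 3.9 of \cite{RosickyTholenFact}, and in particular the claim that reflectivity of the slice adjunctions ``coincides with the idempotent-monad/prefibration condition'' of that paper. It does not: as the introduction of the present paper is careful to point out, a (nerve of a) Grothendieck fibration corresponds via Proposition \ref{prop:cart fib characterization} to the slice functors \(p_x\colon \C_{/x}\to\D_{/px}\) admitting \emph{fully faithful} right adjoints, i.e.\ to the counits being isomorphisms, whereas Rosick\'y--Tholen's prefibrations only require the induced monads to be idempotent, which is strictly weaker. The restriction of Theorem \ref{thm:equiv} therefore yields a fibration-based variant of their result, not the cited theorem itself. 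This does not affect the validity of your proof of the observation, which only claims the analogous statement of the previous theorem.
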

Finally, we record here the following corollary of the main theorem.
\begin{cor}
The reflection functor \(L\colon\E \to L\E\) associated with a cartesian factorization system \((\E,(\L,\R))\) is simple.
\end{cor}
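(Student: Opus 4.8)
The plan is to deduce this as a formal consequence of the main theorem together with Corollary \ref{cor:cart fact are simple}, rather than to re-run the fiber computation by hand. First I would unwind what "associated with a cartesian factorization system" means: the data is an object $(\E,(\L,\R))$ of $\textbf{Fact}_{cart}$, so that $\E$ has a terminal object, $\L$ satisfies two-out-of-three, and condition $(\ast)$ of Definition \ref{def:cartesian objects} holds; the reflection functor is the $L\colon\E\to L\E$ produced in Proposition \ref{prop: localization with 1}.

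The key step is to recognize $L$ as a pointed cartesian fibration. This is already carried out in the essential-surjectivity part of the proof of Theorem \ref{thm:equiv}: condition $(\ast)$ says precisely that the induced maps $\E_{/x}\to L\E_{/Lx}$ are localizations for every $x\in\E$, whence $L$ is a cartesian fibration by Proposition \ref{prop:cart fib characterization}, and it manifestly preserves the terminal object, so it is pointed. Writing $p:=L$, the same passage records that the factorization system $(p_{\L},p_{\R})$ obtained from this cartesian fibration via Proposition \ref{prop:fact sys from cart fib} coincides with the original pair $(\L,\R)$.

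With $L$ thus identified with a pointed cartesian fibration whose induced factorization system is $(\L,\R)$, I would then invoke Corollary \ref{cor:cart fact are simple} directly: it asserts that the localization functor attached to a pointed cartesian fibration is simple, i.e.\ that a morphism $f$ belongs to $p_{\R}$ if and only if the naturality square of the definition of simplicity is a pullback in $\E$. Since $(p_{\L},p_{\R})=(\L,\R)$, the membership condition "$f\in p_{\R}$" is literally "$f\in\R$", so the simplicity of $p=L$ furnished by the corollary is exactly simplicity with respect to the original factorization system, which is what we must prove.

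I do not expect a genuine obstacle here; the statement is a corollary precisely because both ingredients are in place. The only point deserving care is the bookkeeping that matches the two a priori distinct notions of simplicity — the one phrased via $p$-cartesian morphisms in Corollary \ref{cor:cart fact are simple} and the one phrased via membership in $\R$ in the definition — but this matching reduces entirely to the identity $(p_{\L},p_{\R})=(\L,\R)$ already established inside the proof of Theorem \ref{thm:equiv}, so no new argument is needed.
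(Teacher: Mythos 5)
Your proof is correct and follows essentially the same route as the paper: both deduce the result by realizing \((\L,\R)\) as the factorization system induced by a pointed cartesian fibration (via Theorem \ref{thm:equiv}) and then applying Corollary \ref{cor:cart fact are simple}. You merely spell out in more detail that the fibration in question is \(L\) itself and that \((p_{\L},p_{\R})=(\L,\R)\), which the paper leaves implicit.
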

\begin{proof}
Thanks to Theorem \ref{thm:equiv}, we know that \((\L,\R)\) must be induced by a cartesian fibration \(p\colon\E \to \B\). By Corollary \ref{cor:cart fact are simple}, we get the desired result.
\end{proof}
\bibliographystyle{amsplain}
\bibliography{biblio}
\end{document}